\RequirePackage{fix-cm}
\documentclass[smallextended,numbook,runningheads]{svjour3}
\smartqed  
\usepackage{verbatim}
\usepackage{xcolor}
\usepackage{graphicx}
\usepackage{amsmath,amssymb,bm}
\usepackage{graphicx,graphics,color}
\definecolor{refgreen}{rgb}{0,0.5,0}
\usepackage[colorlinks,linkcolor=blue,citecolor=blue]{hyperref}
\usepackage{tikz}
\usepackage{booktabs,subfig} 

\usepackage[utf8]{inputenc}
\journalname{...}
\date{ \phantom{b} \vspace{45mm}\phantom{e}}
\def\makeheadbox{\hspace{-4mm}Version of \today}

\usepackage{color}

\allowdisplaybreaks

\def\half{\tfrac{1}{2}}

\newcommand\bfd{{\mathbf d}}
\newcommand\bfe{{\mathbf e}}

\newcommand\bff{{\mathbf f}}

\newcommand\bfn{{\mathbf n}}

\newcommand\bfu{{\mathbf u}}
\newcommand\bfv{{\mathbf v}}
\newcommand\bfw{{\mathbf w}}
\newcommand\bfx{{\mathbf x}}

\newcommand\bfA{{\mathbf A}}

\newcommand\bfH{{\mathbf H}}

\newcommand\bfM{{\mathbf M}}

\newcommand\bfzero{{\mathbf 0}}

\newcommand\calO{{\mathcal O}}

\newcommand\andquad{\quad\hbox{ and }\quad}

\renewcommand{\d}{\textnormal{d}}

\newcommand{\wt}{\widetilde}

\newcommand{\Ga}{\varGamma}

\newcommand{\nbg}{\nabla_{\varGamma}}

\newcommand{\mat}{\partial^{\mbox{\tiny$\bullet$}}}

\newcommand{\diff}{\frac{\d}{\d t}}
\newcommand{\eps}{\varepsilon}

\newcommand{\nb}{\nabla}

\newcommand{\pa}{\partial}
\newcommand{\R}{\mathbb{R}}

\def \to {\rightarrow}

\newcommand{\vphi}{\varphi}

\newcommand{\bfvb}{\bar\bfv}

\newcommand{\wtu}{\widetilde{\bfu}}
\newcommand{\wtv}{\widetilde{\bfv}}
\newcommand{\wtx}{\widetilde{\bfx}}

\newcommand{\xs}{\bfx^\ast}

\newcommand{\uls}{\bfu_\ast}
\newcommand{\vls}{\bfv_\ast}

\newcommand{\xls}{\bfx_\ast}
\newcommand{\nls}{\bfn_\ast}
\newcommand{\Hls}{\bfH_\ast}

\newcommand{\wtxls}{\widetilde{\bfx}_\ast}

\newcommand{\eu}{\bfe_\bfu}
\newcommand{\ev}{\bfe_\bfv}
\newcommand{\evb}{\bar\bfe_{\bfv}}
\newcommand{\ex}{\bfe_\bfx}

\newcommand{\du}{\bfd_\bfu}
\newcommand{\dv}{\bfd_\bfv}

\newcommand{\dx}{\bfd_\bfx}

\newcommand{\dof}{N}

\newcommand{\doi}[1]{doi:\href{https://doi.org/#1}{\texttt{#1}}}

\newcommand{\be}{\partial^\tau\!}

\newcommand{\area}{\mathcal{E}}
\newcommand{\bflambda}{\boldsymbol{\lambda}}

\def\ecl{\color{black}}

\begin{document}

\title{Structure-preserving Lagrange-multiplier methods for mean curvature flow and their error bounds}

\titlerunning{Lagrange-multiplier FEMs for curvature flows}        

\author{Bal\'{a}zs Kov\'{a}cs \\
Buyang Li\\
Christian Lubich}

\authorrunning{B. Kov\'{a}cs, B. Li, and C. Lubich} 

\institute{B.~Kov\'{a}cs: Institute of Mathematics, Paderborn University, Warburgerstr.~100., 33098 Paderborn, Germany.\\ 
B. Li: Department of Applied Mathematics, The Hong Kong Polytechnic University, Hong Kong. \email{buyang.li@polyu.edu.hk}\\ 
C. Lubich: Mathematisches Institut, Universität Tübingen, Auf der Morgenstelle 10, D-72076 Tübingen, Germany.\\
}

\date{\today}

\maketitle

\begin{abstract}
We propose and analyze a class of evolving surface finite element methods for mean curvature flow of closed surfaces using Lagrange multipliers for preserving the energy-decreasing structure. The algorithm is based on the solution-driven formulation using Huisken's evolution equations for the normal vector and the mean curvature. The time
discretizations use linearly implicit backward difference formulas (BDF) for the
parabolic geometric variables and implicit Adams updates for the nodal
positions. The approach also accommodates artificial tangential velocities of
minimal-deformation-rate type. The resulting fully discrete algorithms are area-decreasing at every time step, with a prescribed decay rate determined by the computed mean curvature.
We prove local existence and uniqueness of the discrete Lagrange multiplier and convergence of a simplified Newton iteration for its computation under weak regularity assumptions. Under  stronger regularity assumptions, as used in the convergence theory for the underlying evolving surface finite element method, we derive optimal-order error bounds of order $h^k+\tau^q$ in the $H^1$-norm for finite elements of polynomial degree $k\ge 2$ and $q$-step BDF and $q$-step implicit Adams methods with $2\le q\le 5$, both without and with minimal-deformation-rate tangential motion.  Numerical experiments for mean curvature flow of a sphere confirm the predicted convergence rates and show that the Lagrange-multiplier correction entails
only a small computational overhead that is essentially independent of the mesh size and the time step size.
\end{abstract}

\tableofcontents

\section{Introduction}

Surface evolution under geometric flows, including mean curvature flow, Willmore flow, and surface diffusion flow, has been intensively studied in the last decades \cite{Randolph-et-2007,Thompson-2012,Jiang-et-2019,Nitschke-Reuther-Voigt-2020,Torres-Sanchez-2019} both from the viewpoint of applications and mathematical analysis. 
However, approximating surface evolution under geometric flows by numerical methods remains a challenging task. Numerical approximation of the mean curvature flow was first addressed by Dziuk \cite{Dziuk1990} in 1990 through a finite element method (FEM) with moving nodes that determine the approximate surface. 
In the literature, such FEMs appear under various synonymous names: Evolving surface FEM, evolving FEM, and parametric FEM. Here we will often just speak of FEM for brevity.
Many researchers have adopted and extended  FEMs to various geometric flows of curves and surfaces, including:
\begin{itemize}
    \item The development of evolving surface FEMs by Dziuk and Elliott \cite{DziukElliott_ESFEM}.
    \medskip
    \item The development of parametric FEMs with re-meshing techniques by B\"ansch, Morin, and Nochetto \cite{Bansch2005}.
    \medskip
    \item The introduction of tangential motions to improve the mesh quality (i.e., mesh points distribution and shape of triangles) of computed surfaces by Barrett, Garcke, and N\"urnberg \cite{Barrett2007,Barrett2008}. 
    \medskip
    \item The development of mesh quality improving tangential motions using harmonic map heat flow and the DeTurck trick \cite{Elliott2016,Elliott-Fritz-2017}. Semi-discrete error estimates for a DeTurck-based FEM discretisation were recently shown in \cite{DeckelnickStyles2026}.
    \medskip
    \item The development of tangential motions which make the evolving surfaces have minimal deformation rate (MDR) \cite{Hu-Li-2022}.
    \medskip
    \item The development of tangential motions ensuring minimal deformation from the initial surface \cite{DL-2024}.
        \medskip
    \item The development of tangential motions via a dual formulation of the minimal deformation rate (dual-MDR) scheme \cite{GaoLiTang2026}.
\end{itemize}

The first rigorous proof of convergence for evolving FEMs applied to mean curvature flow on closed surfaces was established in \cite{MCF}. This approach involves reformulating the mean curvature flow into an equivalent system of solution-driven surface evolution using Huisken's evolution equations for the normal vector and mean curvature \cite{Huisken1984}, as well as analyzing the error of the evolving FEM using the matrix-vector framework and its properties established in \cite{DLM2012,KLLP2017}. This approach was later extended to forced mean-curvature flow \cite{KLL2020,EGK2022} and further to Willmore flow and surface diffusion \cite{Willmore}, reformulating Willmore flow and surface diffusion as a solution-driven surface evolution and proving the convergence of the FEM for this equivalent formulation. These advances of numerical analysis also allowed proving the convergence of Dziuk's original semi-implicit FEM (with finite elements of degree $k\ge 3$) for mean curvature flow \cite{Bai-Li-2024}, as well as the convergence of a stabilized FEM incorporating artificial tangential motions of the Barrett--Garcke--N\"urnberg type for curve shortening flow \cite{Bai-Li-2025}. 

Despite these advances in the numerical analysis of mean curvature flow, existing algorithms with proven convergence so far lack at least one of the following desirable features: (i) \emph{energy stability} and (ii) an explicit \emph{tangential motion} mechanism for improving
the quality of the evolving mesh. Both properties have been achieved by several alternative approaches, for example:
\begin{itemize}
  \item Energy-stable FEMs augmented with artificial tangential motion of the
  Barrett--Garcke--N\"urnberg type to improve mesh quality (e.g., node distribution and
  triangle shape) on computed surfaces; see
  \cite{Bao2021,Barrett2007,Barrett2008}.
  \medskip
  \item Energy-stable FEMs based on Lagrange-multiplier formulations
  \cite{GJSZ-2025,Gao-Li-2025}, including tangential motions of the Barrett--Garcke--N\"urnberg type and of minimum-deformation-rate type.
  \medskip
  \item High-order energy-stable algorithm with artificial tangential motion based on a dual-MDR formulation \cite{GaoLiTang2026} for both mean curvature flow and surface diffusion.
  \medskip
  \item Structure-preserving high-order continuous-in-time Petrov–Galerkin discretisation of geometric surface flows \cite{ZhangAndrewsFarrell2026}. This approach uses auxiliary variables and is provably energy-stable and volume-conserving.
\end{itemize} 
However, a rigorous convergence and error analysis for these methods remains open and challenging.

In this paper, we develop a rigorous error analysis for energy-stable FEMs based on a Lagrange multiplier approach, building upon the existing error analysis of evolving FEMs in \cite{MCF}. This approach can be combined with incorporating tangential motions to improve the mesh quality, building additionally on \cite{Hu-Li-2022}. We introduce new frameworks for the numerical methods and geometric flows and integrate them into the existing error analysis. In this way our results provide a unified theoretical foundation for understanding and advancing numerical methods in this field. 

While we present the Langrange multiplier approach to preserving energy decay only for mean curvature flow, the approach could be similarly used to obtain a fully discrete energy-decaying method for Willmore flow and an energy-decaying and volume-preserving method for surface diffusion, allowing for a rigorous convergence analysis by combining the techniques of this paper with the error analysis in \cite{Willmore,BullerjahnKovacs2026Willmore}, and presumably this can be extended to further geometric gradient flows.

The paper is organized as follows.  In Section~2 we introduce continuous Lagrange-multiplier formulations of mean curvature flow, first without tangential motion and then with an artificial tangential velocity of minimal-deformation-rate type.  Section~3 recalls the evolving surface finite element discretization of the solution-driven formulation of mean curvature flow used in \cite{MCF}.  In Section~4 we formulate the corresponding
semi-discrete Lagrange-multiplier methods and show how the multiplier enforces the discrete area decay.  Section~5 presents the fully discrete linearly-implicit BDF and implicit Adams schemes, including the scalar nonlinear equation for the Lagrange multiplier, and states the main error estimates.  The proof for the method without tangential motion is given in Section~6, where the main additional ingredient is a stability estimate for the multiplier.  Section~7 extends the argument to the method with minimal-deformation-rate tangential
motion.  Finally, Section~8 reports implementation details and numerical experiments illustrating the convergence rates, the area decay, and the computational overhead of the Lagrange-multiplier correction.

\section{Continuous formulations with Lagrange multipliers}

In this paper, we shall develop a Lagrange multiplier approach for constructing energy-stable FEMs for mean curvature flow and related problems. Consider the evolution of a surface under mean curvature flow with initial surface \( \Gamma^0 \), and let the surface \( \Gamma[X(\cdot,t)] \) be the image of a flow map \( X(\cdot,t): \Gamma^0 \to \mathbb{R}^3 \). The original continuous reformulation of mean curvature flow that is discretized by Kov\'acs, Li, and Lubich in \cite{MCF} incorporates the evolution equations for the normal vector $n$ and mean curvature $H$ derived by Huisken \cite{Huisken1984} and is written as follows:
\begin{align}\label{KLL-form}
\begin{aligned}
    \partial_t X &= v \circ X,  \\
    v &= -Hn,  \\
    \mat n - \Delta_{\Gamma[X]} n &= |\nabla_{\Gamma[X]} n|^2 n,  \\
    \mat H - \Delta_{\Gamma[X]} H &= |\nabla_{\Gamma[X]} n|^2 H, 
\end{aligned}
\end{align}
where $\mat$ is the material derivative, i.e. 
$\mat u(x,t)= \frac{\d}{\d t}u(X(q,t),t)$ at $x=X(q,t)$ with $q\in\Gamma^0$,
and
where \( v, n \), and \( H \) denote the velocity, normal vector, and mean curvature of the evolving surface \( \Gamma[X] = \Gamma[X(\cdot,t)] \). However, direct discretization of \eqref{KLL-form} by FEMs does not preserve the energy (area) decrease in the continuous mean curvature flow. 

We introduce an additional Lagrange multiplier \( \lambda \) to this formulation, rewriting it into the following form:
\begin{subequations}\label{KLL-lambda}
\begin{align}
    \partial_t X &= (1+\lambda) \, v \circ X,  \\
    v &= -Hn , \notag \\
    \mat n - \Delta_{\Gamma[X]} n &= |\nabla_{\Gamma[X]} n|^2 n,  \\
    \mat H - \Delta_{\Gamma[X]} H &= |\nabla_{\Gamma[X]} n|^2 H,  \\
    \frac{\d}{\d t}\int_{\Gamma[X]} 1&= -\int_{\Gamma[X]} H^2 . 
    \label{eq:MCF energy decay}
\end{align}
\end{subequations}
The last equation is a constraint corresponding to the Lagrange multiplier \( \lambda \), as the exact solution of mean curvature flows satisfies the following relation:
\begin{equation} \label{area-decay}
    \frac{\d}{\d t} \int_{\Gamma[X]} 1 = \int_{\Gamma[X]} \nabla_{\Gamma[X]} \cdot v
    = \int_{\Gamma[X]} Hn \cdot v
    = -\int_{\Gamma[X]} H^2 , 
\end{equation}
where the first equality is the Leibniz formula and the second equality is obtained from
$\nbg \cdot v = \nbg \cdot (- H n) = -  \nbg H \cdot n - H \nbg \cdot n = - H^2$ ;
cf.~\cite[Corollary~3.6~(i)]{Huisken1984}.
Therefore, the exact solution of mean curvature flow, together with $\lambda=0$, satisfies \eqref{KLL-lambda}. In other words, the solution of the continuous formulation in \eqref{KLL-form} automatically satisfies \eqref{KLL-lambda} with \( \lambda = 0 \). 

The numerical solution of \eqref{KLL-form} may not satisfy the energy-decay property $\frac{\d}{\d t} \int_{\Gamma[X]} 1 \leq 0,$ while the numerical solution of \eqref{KLL-lambda} still retains this energy-decay property due to the constraint in \eqref{eq:MCF energy decay}. 
One challenge to be overcome in this paper is the convergence of FEM together with an appropriate time discretization for \eqref{KLL-lambda}, which is not obvious due to the introduction of the Lagrange multiplier and constraint. 

Moreover, we shall further consider a Lagrange multiplier formulation that incorporates an artificial tangential motion of MDR-type:
\begin{subequations}\label{KLL2-lambda}
\begin{align}
    \partial_t X &= (1+\lambda)\, v \circ X,  \\
    v \cdot n &= -H ,  \label{MDR-1}\\
    -\Delta_{\Gamma[X]} v &= -\mu n,  \label{MDR-2}\\
    \mat n - v\cdot\nabla_{\Gamma[X]}n - \Delta_{\Gamma[X]} n &= |\nabla_{\Gamma[X]} n|^2 n,  \\
    \mat H - v\cdot\nabla_{\Gamma[X]}H - \Delta_{\Gamma[X]} H &= |\nabla_{\Gamma[X]} n|^2 H,  \\
    \frac{\d}{\d t}\int_{\Gamma[X]} 1&= -\int_{\Gamma[X]} H^2 . \label{KLL2-lambda-constraint}
\end{align}
\end{subequations}
The second and third equations generate a tangential motion that minimizes the deformation rate  $$ \int_{\Gamma[X]} |\nabla_{\Gamma[X]} v|^2$$ of the evolving surface under the constraint \eqref{MDR-1}. The unknown function $\mu\in H^{-1}(\Gamma[X])$ is a Lagrange multiplier of this constrained optimization problem, which is to be determined. Discretization of \eqref{KLL2-lambda} maintains mesh quality as the surface evolves; see the motivation and discussions in \cite{Hu-Li-2022}.
Here we can naturally incorporate this MDR tangential motion into our Lagrange multiplier approach. Again, the convergence of FEMs together with an appropriate time discretization for this continuous formulation is a challenge to be addressed in this paper.

\section{Evolving FEM for mean curvature flow: method of \cite{MCF}}
\label{section:semi-discrete MCF}

In this section, we recall the evolving FEM for \eqref{KLL-form} discussed in \cite{MCF}, using the notion of evolving surface FEM described in \cite{Dziuk88,DziukElliott_ESFEM}. In this method, the given smooth initial surface $\Gamma^0$ is approximated by a piecewise curved triangulated surface that interpolates $\Gamma^0$, with each triangular piece of $\Gamma_h^0$ being the image of the reference plane triangle under a polynomial map of degree $k$; see \cite{DziukElliott_ESFEM,demlow2009higher,kovacs2018higher}. The nodal vector $\bfx^0=(x_1^0, \cdots, x_\dof^0) \in \R^{3\dof}$, which collects all the nodes $x_j^0$ $(j=1,\dots,\dof)$ of $\Gamma_h^0$, is evolved in time, and its value at time $t$ is denoted by $\bfx(t)$, determining a triangulated surface $\Gamma_h[\bfx(t)]$ that approximates $\Gamma(t)$.

The finite element spaces on $\Ga_h[\bfx(t)]$ and $\Ga_h[\bfx^*(t)]$ are denoted by $S_h[\bfx(t)]$ and $S_h[\bfx^*(t)]$, respectively. The evolving FEM for mean curvature flow discussed in \cite{MCF} essentially seeks nodal vectors $\bfx(t) \in \R^{3\dof}$, $\bfv(t)\in \R^{3\dof}$ and $\bfu(t)=(\bfn(t);\bfH(t))\in \R^{4\dof}$ which approximate the interpolated nodal vectors $\bfx^*(t)$, $\bfv^*(t)$ and $\bfu^*(t)$, respectively.

The semi-discrete evolving finite element scheme is to find nodal vectors $\bfx(t) \in \R^{3\dof}$ determining the surface $\Ga_h[\bfx(t)]$ and their velocities $\bfv(t)\in \R^{3\dof}$ and the nodal vector $\bfu(t)=(\bfn(t);\bfH(t))\in \R^{4\dof} $ of a finite element function $u_h(\cdot,t)=(n_h(\cdot,t);H_h(\cdot,t))\in S_h[\bfx(t)]^4$
satisfying the following weak formulation for all finite element test functions  ${\varphi_h}\in S_h[\bfx(t)]^4$ and all $t\in[0,T]$: omitting the argument $t$,
\begin{equation}
	\label{eq:MCF discrete}
	\begin{aligned}
		\int_{\Ga_h[\bfx]} \!\!\! \mat_h u_h\cdot \varphi_h + \int_{\Ga_h[\bfx]} \!\!\!\! \nabla_{\Ga_h[\bfx]} u_h \cdot \nabla_{\Ga_h[\bfx]} \varphi_h  = &\ \int_{\Ga_h[\bfx]} \!\!\! 
        |\nabla_{\Gamma_h[\bfx]} n_h|^2 u_h  \cdot \varphi_h  
        \\[2mm]
        \bfv = &\ -  \bfH \bullet \bfn 
        \\
        {\bf\dot\bfx} = &\ \bfv ,
	\end{aligned}
\end{equation}
where $\mat_h u_h$ is the material derivative of $u_h$ on $\Ga_h[\bfx]$, which equals the finite element function with nodal vector ${\bf\dot \bfu}=d\bfu/dt$.
Furthermore, $\bullet$ denotes the componentwise product of vectors, i.e. 
$(\bfH \bullet \bfn)_i=H_i n_i = H_h(x_i) n_h(x_i)$. We mention that in \cite{MCF} the discrete velocity was determined via a Ritz projection, but it was found later in \cite{Willmore} that the simpler construction of $\bfv$ as given above also yields a stable discretization of the same order of convergence.


For $t \in [0,T]$, the globally continuous finite element nodal basis functions which span $S_h[\bfx(t)]$ are denoted by $\phi_j[\bfx(t)] $, $j=1,\dotsc,\dof$. They are the unique functions whose piecewise pullback to the flat reference triangle is a polynomial of degree $k$ and which have the nodal values $\phi_j[\bfx(t)](x_i(t)) = \delta_{ij}$ for all $i,j = 1, \dotsc, \dof$.  

\medskip\noindent
{\bf Matrix--vector formulation.}
The surface mass matrix $\bfM(\bfx)\in \R^{\dof\times\dof}$ and the stiffness matrix $\bfA(\bfx)\in \R^{\dof\times\dof}$are defined in terms of these finite element basis functions via their $(i,j)$th entries, for $i, j = 1, \ldots, \dof$,
\begin{align}\label{def-matrix-M-A}
\begin{aligned}
		\bfM(\bfx)|_{ij} = &\ \int_{\Ga_h[\bfx]}  \!\! \phi_i[\bfx] \, \phi_j[\bfx]  ,\\
		\bfA(\bfx)|_{ij} = &\ \int_{\Ga_h[\bfx]}  \!\!\!\! \nb_{\Ga_h[\bfx]} \phi_i[\bfx] \cdot \nb_{\Ga_h[\bfx]} \phi_j[\bfx]  .
\end{aligned}
\end{align}
We introduce the vector $\bff(\bfx,\bfu)\in \R^{4\dof}$ with entries
\begin{equation*}
	\begin{aligned}
		\bff(\bfx,\bfu)|_{j+(\ell-1)\dof} = &\int_{\Ga_h[\bfx]} \!\!\! |\nabla_{\Ga_h[\bfx]}n_h|^2 u_h  \cdot  \phi_j[\bfx] ,
	\end{aligned}	
\end{equation*}
for $j = 1,\dotsc,\dof$ and $\ell = 1,2,3,4$.


The formulation of the semi-discretized coupled system for mean curvature flow \eqref{eq:MCF discrete} then leads to the matrix--vector formulation of \cite[equation~(3.4)--(3.5)]{MCF}: writing $\bfM(\bfx)$ and $\bfA(\bfx)$ in short for $\bfM(\bfx)\otimes I_4$ and $\bfA(\bfx)\otimes I_4$ with the 4-dimensional identity matrix $I_4$, we have with $\bfu=(\bfn;\bfH)$
\begin{subequations}
\label{eq:MCF matrix-vector}
	\begin{align} 
		\bfM(\bfx) {\bf\dot\bfu} + \bfA(\bfx) \bfu = &\  \bff(\bfx,\bfu) , \\
		\bfv = &\ - \bfH \bullet \bfn , \\
		{\bf\dot\bfx} = &\ \bfv .
	\end{align}
\end{subequations} 

\section{Semi-discrete methods with area decay}
\label{section:semi-discrete MCF with area decay}

\subsection{Area-decreasing method without tangential velocity}

 The \emph{area-decreasing} evolving FEM for the coupled system \eqref{eq:MCF energy decay} reads as follows. 
 Find nodal vectors $\bfx(t) \in \R^{3\dof}$ determining the surface $\Ga_h[\bfx(t)]$ and their velocities $\bfv(t)\in \R^{3\dof}$ and the nodal vector $\bfu(t)=(\bfn(t);\bfH(t))\in \R^{4\dof} $ of a finite element function $u_h(t)=u_h(\cdot,t)=(n_h(t);H_h(t))\in S_h[\bfx(t)]^4$ 
satisfying the following weak formulation: for all $t\in[0,T]$ and for all finite element test functions  ${\varphi_h}\in S_h[\bfx(t)]^4$ we require (omitting the argument $t$)
\begin{equation}
	\label{eq:MCF discrete area-decreasing}
	\begin{aligned}
		\int_{\Ga_h[\bfx]} \!\!\! \mat_h u_h\cdot \varphi_h + \int_{\Ga_h[\bfx]} \!\!\!\! \nabla_{\Ga_h[\bfx]} u_h \cdot \nabla_{\Ga_h[\bfx]} \varphi_h  = &\ \int_{\Ga_h[\bfx]} \!\!\! 
        |\nabla_{\Gamma_h[\bfx]} n_h|^2 u_h  \cdot \varphi_h  
        \\[2mm]
        \bfv = &\ -  \bfH \bullet \bfn 
        \\
        {\bf\dot\bfx} = &\ \big( 1 + \lambda_h \big)\bfv ,
	\end{aligned}
\end{equation}
where 
$\lambda_h(t) \in \R$ is a Lagrange multiplier which is to ensure that the area (energy) of the computed surface, i.e., 
$$
    \area_h[\bfx(t)] := \area(\Ga_h[\bfx(t)]) = \int_{\Ga_h[\bfx(t)]} 1 ,
$$
decreases along the semi-discrete flow analogously to the continuous flow, see \eqref{eq:MCF energy decay}: 
\begin{equation}
\label{eq:discrete MCF energy decay - a}
	\diff \area_h[\bfx] = - \| H_h \|_{L^2(\Ga_h[\bfx])}^2 \leq 0.
\end{equation}
With the finite element functions $x_h$ and $v_h$ on $\Gamma_h[\bfx]$ with nodal vectors $\bfx$ and $\bfv$, respectively, we obtain
\begin{equation}
\label{eq:discrete MCF energy decay - b}
	\begin{aligned}
		\diff \int_{\Ga_h[\bfx]} 1 
        & = \int_{\Ga_h[\bfx]} \nabla_{\Ga_h[\bfx]} \cdot {\mat_h x_h} 
        = (1+\lambda_h) \int_{\Ga_h[\bfx]}\nabla_{\Ga_h[\bfx]} \cdot v_h ,
	\end{aligned}
\end{equation}
because of the Leibniz formula in the first equality and  because the third relation in \eqref{eq:MCF discrete area-decreasing} and the transport property  $\mat_h \phi_i[\bfx]=0$ of the basis functions imply
$
{\mat_h x_h} 
= (1+\lambda_h) v_h,
$
which yields the second equality.
Therefore, the combination of \eqref{eq:discrete MCF energy decay - a} and \eqref{eq:discrete MCF energy decay - b} yields the following explicit formula for the semi-discrete Lagrange multiplier:
\begin{equation}
\label{eq:def lambda_h}
	\lambda_h(t) = -1 - \frac{\| H_h \|_{L^2(\Ga_h[\bfx])}^2}{\int_{\Ga_h[\bfx]}\nabla_{\Ga_h[\bfx]} \cdot v_h} ,
\end{equation}
provided that the denominator is nonzero.
Here, $\| H_h \|_{L^2(\Ga_h[\bfx])}^2 = \bfH^{\rm T} \bfM(\bfx) \bfH$ with the mass matrix $\bfM(\bfx)$ of \eqref{def-matrix-M-A}.

The matrix--vector formulation of the area-decreasing semi-discretization \eqref{eq:MCF discrete area-decreasing} reads:
with $\bfu=(\bfn;\bfH)$,
\begin{subequations}
	\label{eq:MCF matrix-vector area-decreasing}
	\begin{align} 
        \bfM(\bfx) {\bf\dot\bfu} + \bfA(\bfx) \bfu = &\  \bff(\bfx,\bfu) , \\
        \bfv = &\ - \bfH \bullet \bfn , \\
        {\bf\dot\bfx} = &\ (1 + \lambda_h) \bfv , \\
        \label{eq:MCF matrix-vector area-decreasing - a}
        \frac{\d}{\d t} \area_h[\bfx] = &\ -  \bfH^{\rm T} \bfM(\bfx) \bfH .
	\end{align}
\end{subequations}
Note that \eqref{eq:MCF matrix-vector area-decreasing - a} is equivalent to the explicit expression of $\lambda_h$ in \eqref{eq:def lambda_h}. It is formulated to keep the semi-discrete scheme analogous to the fully discrete scheme that will be presented in the next section.

\subsection{Area-decreasing method with artificial tangential motion}

The area-decreasing evolving FEM with the MDR tangential motion introduced in \cite{Hu-Li-2022} for the coupled system \eqref{KLL2-lambda} reads as follows. Find a nodal vector $\bfx(t) \in \R^{3\dof}$ which determines the approximate surface $\Gamma_h[\bfx(t)]$, as well as nodal vectors $\bfv(t)\in \R^{3\dof}$, $\bfu(t)=(\bfn(t);\bfH(t))\in \R^{4\dof}$ and $\boldsymbol{\mu}(t)\in\R^{\dof}$ such that the corresponding finite element functions $v_h(t)=v_h(\cdot,t)\in S_h[\bfx(t)]^3$, $u_h(t)=(n_h(t);H_h(t))\in S_h[\bfx(t)]^4$ and $\mu_h(t)\in S_h[\bfx(t)]$ satisfy the following weak formulation: for all $t\in [0,T]$ and for all $\varphi_h\in S_h[\bfx(t)]^4$ and $\psi_h\in S_h[\bfx(t)]^3$, $\chi_h\in S_h[\bfx(t)]$,
\begin{subequations}
\label{eq:MCF-MDR}
	\begin{align}
		\int_{\Ga_h[\bfx]} {\mat_h u_h}\cdot {\varphi_h} + \int_{\Ga_h[\bfx]} \!\!\!\! \nabla_{\Gamma_h[\bfx]} u_h \cdot \nabla_{\Gamma_h[\bfx]} {\varphi_h} = &\ \int_{\Ga_h[\bfx]} \!\!\! |\nabla_{\Gamma_h[\bfx]} n_h|^2 u_h \cdot {\varphi_h} \notag\\
        &\
        + \int_{\Ga_h[\bfx]} \!\!\! [(v_h\cdot \nabla_{\Gamma_h[\bfx]}) u_h] \cdot {\varphi_h} , \label{eq:MCF-MDR-eq2}\\[2mm]
        \int_{\Ga_h[\bfx]} \nabla_{\Gamma_h[\bfx]} v_h\cdot\nabla_{\Gamma_h[\bfx]}\psi_h = &\ -\int_{\Ga_h[\bfx]} \mu_h n_h\cdot \psi_h , \\
        \int_{\Ga_h[\bfx]} v_h\cdot n_h\, \chi_h = &\ -\int_{\Ga_h[\bfx]} H_h {\chi_h}  , \label{eq:MCF-MDR-eq3}\\[5mm]
		{\bf\dot\bfx} = &\ \big( 1 + \lambda_h \big) \bfv, \\
        \diff \area_h[\bfx(t)] = &\ - \| H_h \|_{L^2(\Ga_h[\bfx])}^2 ,
	\end{align}
\end{subequations}
where again $\lambda_h(t) \in \R$ is a Lagrange multiplier which ensures that the  area of the computed surface decreases as time grows. According to the discussions in \eqref{eq:discrete MCF energy decay - a}--\eqref{eq:def lambda_h}, $\lambda_h$ is given by the explicit formula \eqref{eq:def lambda_h}.

The matrix--vector formulation of this area-decreasing evolving FEM with  MDR tangential motion reads: with $\bfu=(\bfn;\bfH)$,
\begin{subequations}
	\label{eq:MCF-matrix-vector-MDR}
	\begin{align} 
        \bfM(\bfx) {\bf\dot\bfu} + \bfA(\bfx) \bfu = &\  \bff(\bfx,\bfu,\bfv) , \\[5pt]
        \begin{pmatrix}
            {\bf A}(\bfx) &  {\bf B}(\bfx,\bfn)^{\rm T}
            \\
            {\bf B}(\bfx,\bfn)  & \bfzero
        \end{pmatrix}
        \begin{pmatrix}
            \bfv \\ \boldsymbol{\mu}
        \end{pmatrix}
        =&\
        \begin{pmatrix}
            \bfzero \\ -  {\bf M}(\bfx)\bfH
        \end{pmatrix},
        \\[5pt]
         {\bf\dot\bfx} = &\ (1 + \lambda_h) \bfv , \\[-2mm]
        \frac{\d}{\d t} \area_h[\bfx] = &\ -  \bfH^{\rm T} \bfM(\bfx) \bfH ,
	\end{align}
\end{subequations}
where ${\bf f}({\bf x},{\bf u},{\bf v})$ is the unique nodal vector in $\R^{4\dof}$ satisfying  
$$
{\bf f}({\bf x},{\bf u},{\bf v})^{\rm T} \boldsymbol{\varphi}
= \int_{\Ga_h[\bfx]} \!\!\! |\nabla_{\Gamma_h[\bfx]} n_h|^2 u_h \cdot {\varphi_h} 
  + \int_{\Ga_h[\bfx]} \!\!\! [(v_h\cdot \nabla_{\Gamma_h[\bfx]}) u_h] \cdot {\varphi_h} 
$$
for all finite element functions $\varphi_h\in S_h[{\bf x}]^4$ with nodal vector $\boldsymbol{\varphi}$, and ${\bf B}(\bfx,\bfn)$ is the unique $\dof\times3\dof$ matrix satisfying the following condition:  
$$
\boldsymbol{\chi}^{\rm T} {\bf B}(\bfx,\bfn)\boldsymbol{\psi}
= \int_{\Ga_h[\bfx]} \psi_h\cdot n_h \chi_h
\quad\mbox{for all}\,\, \boldsymbol{\psi} \in\R^{3\dof}\,\,\mbox{and}\,\,\, \boldsymbol{\chi} \in \R^{\dof}, 
$$
for all finite element functions $\psi_h\in S_h[\bfx]^3$ and $\chi_h\in S_h[\bfx]$ with nodal vectors $\boldsymbol{\psi} $ and $\boldsymbol{\chi}$, respectively.

In the next section, we introduce the corresponding fully discrete FEMs  with Lagrange multipliers. Error bounds for the semi-discrete FEMs
in~\eqref{eq:MCF matrix-vector area-decreasing} and~\eqref{eq:MCF-matrix-vector-MDR}
(without and with tangential motion, respectively) follow directly from the fully discrete
error bounds proved in Theorems~\ref{theorem-fd-1} and~\ref{theorem-fd-2} by taking the
limit \(\tau \to 0\). Therefore, we do not state the semi-discrete error bounds separately.

\section{Fully discrete methods with area decay}

For the time discretization of the system of ordinary differential equations \eqref{eq:MCF matrix-vector area-decreasing} we use a multistep approach: a $q$-step linearly implicit backward difference formula (BDF method), with $2 \le q \leq 5$, for the (stiff) geometric equation, and a $q$-step implicit Adams method for the (nonstiff) differential equation for the positions. 

\subsection{Area-decreasing method without tangential velocity}

The $q$-step backward difference formula (BDF) method is determined by its coefficients $\delta_j$, given by $\delta(\zeta)=\sum_{j=0}^q \delta_j \zeta^j=\sum_{\ell=1}^q \frac{1}{\ell}(1-\zeta)^\ell$. In its linearly implicit version, the BDF method uses extrapolation coefficients $\gamma_j$ defined by $\gamma(\zeta) = \sum_{j=0}^{q-1} \gamma_j \zeta^j = (1 - (1-\zeta)^q)/\zeta$. 
The  BDF method is known to be zero-stable and $A(\alpha)$-stable with $\alpha>0$ for $q\leq 6$, see \cite[Chapter~V]{HairerWannerII}, and for $q\le 5$, to converge with order $q$ for linear parabolic equations on moving surfaces \cite{LubichMansourVenkataraman_bdsurf}.
This order is retained by the linearly implicit variant for quasilinear parabolic equations using the above coefficients $\gamma_j$; cf.~\cite{AkrivisLubich_quasilinBDF,AkrivisLiLubich_quasilinBDF}.

For a fixed step size $\tau>0$, we define $t_n = n \tau$ for $n=0,1,2,\dots$, and denote by $\widetilde \bfx^n$ the $q$-step extrapolated values defined by the following formula: 
\begin{equation}
\label{eq:extrapolation def}
    \widetilde \bfx^n := 
    \left\{\begin{aligned}
    &\sum_{j=0}^{q-1} \gamma_j \bfx^{n-1-j} &&\mbox{for}\,\,\,	n \geq q , \\
    &\bfx^n &&\mbox{for}\,\,\, 0\le n \le q-1 .
    \end{aligned}\right.
\end{equation}
We use the analogous definition for $\wt \bfu^n=(\wt \bfn^n;\wt \bfH^n)$. Moreover, we denote by ${\bf\dot\bfu}^n$ the $q$-step backward finite difference defined by 
\begin{align}\label{eq:BDF def}
{\bf\dot\bfu}^n := \frac{1}{\tau} \sum_{j=0}^q \delta_j \bfu^{n-j} .
\end{align}
We further define the weighted average of the velocity values that appears in the $q$-step implicit Adams method,
\begin{align}\label{def-v-beta}
{\bar\bfv}^n
= \sum_{j=0}^{q}\beta_j\,\mathbf{v}^{\,n-j},
\end{align}
with the weights 
\(
\beta_j=\frac{1}{\tau}\int_{t_{n-1}}^{t_n} L_j^{n}(t)\,\mathrm{d}t,
\)
where \(L_j^{n}(t)\) are the Lagrange basis polynomials of degree \(q\) associated with the
nodes \(\{t_n,t_{n-1},\ldots,t_{n-q}\}\), i.e., 
$L_j^{n}(t_{n-i})=\delta_{ij}$ for $i,j=0,\ldots,q$.
The coefficients \( (\beta_j)_{j=0}^{q}\) define the \(q\)-step implicit Adams method and can be precomputed and stored for use in the implementation; see Table~\ref{tab:beta-coeffs}. It is well known that the \(q\)-step implicit Adams method has convergence order \(q+1\) when applied to nonstiff ordinary differential equations; see \cite[Chapter~III]{HNW}.

\begin{table}[htbp]
\centering
\caption{The coefficients \(\beta_j\) in the $q$-step implicit Adams method.}
\label{tab:beta-coeffs}
\normalsize
\begin{tabular}{ccccccc}
\hline
\(q\) & \(\beta_0\) & \(\beta_1\) & \(\beta_2\) & \(\beta_3\) & \(\beta_4\) & \(\beta_5\) \\[3pt]
\hline
0 & 1 &  &  &  &  &  \\[3pt]
1 & \(\tfrac12\) & \(\tfrac12\) &  &  &  &  \\[3pt]
2 & \(\tfrac{5}{12}\) & \(\tfrac{8}{12}\) & \(-\tfrac{1}{12}\) &  &  &  \\[3pt]
3 & \(\tfrac{9}{24}\) & \(\tfrac{19}{24}\) & \(-\tfrac{5}{24}\) & \(\tfrac{1}{24}\) &  &  \\[3pt]
4 & \(\tfrac{251}{720}\) & \(\tfrac{646}{720}\) & \(-\tfrac{264}{720}\) & \(\tfrac{106}{720}\) & \(-\tfrac{19}{720}\) &  \\[3pt]
5 & \(\tfrac{475}{1440}\) & \(\tfrac{1427}{1440}\) & \(-\tfrac{798}{1440}\) & \(\tfrac{482}{1440}\) & \(-\tfrac{173}{1440}\) & \(\tfrac{27}{1440}\) \\[3pt]
\hline
\end{tabular}
\end{table}

For $n\ge q$, we determine the approximations $\bfx^n$ to $\bfx^*(t_n)$, $\bfv^n$ to $\bfv^*(t_n)$, $\bfu^n$ to $\bfu^*(t_n)$ by the fully discrete linearly implicit method
\begin{subequations}
	\label{eq:multistep area-decreasing}
	\begin{align} 
        \label{eq:multistep area-decreasing - u}
        \bfM(\wtx^n) {\bf\dot\bfu}^n + \bfA(\wtx^n) \bfu^n = &\  \bff(\wtx^n,\wtu^n) , \\
        \label{eq:multistep area-decreasing - v}
        \bfv^n = &\ - \bfH^n \bullet {\bfn}^n , \\
        \label{eq:multistep area-decreasing - x}
        \frac{1}{\tau} \big( \bfx^n - \bfx^{n-1} \big) = &\ (1 + \lambda^n) {\bar\bfv}^n, \\
        \label{eq:multistep area-decreasing - area}
         \area_h[\bfx^n] - \area_h[\bfx^{n-1}]
        = &\ - \int_{t_{n-1}}^{t_n} p^n(t)^2 \,\d t , 
	\end{align}
\end{subequations}
where $\lambda^n \in \R$ is a Lagrange multiplier which enforces \eqref{eq:multistep area-decreasing - area}, $\bar\bfv^n$ is defined by \eqref{def-v-beta}, and $p^n(t)$ is the interpolation polynomial through the points $(t_{n-j},\|H_h^{n-j}\|_{L^2(\Ga_h[\wtx^{n-j}])})$ for $0\le j \le q$, where  $\|H_h^{n-j}\|_{L^2(\Ga_h[\wtx^{n-j}])} = \linebreak \bigl((\bfH^{n-j})^T\bfM(\wtx^{n-j})\bfH^{n-j}\bigr)^{1/2}$. The surface area thus decreases with a rate consistent with the approximate mean curvature given by the numerical scheme; cf.~\eqref{area-decay}.

The starting values $\bfx^i$ and $\bfu^i$ ($i=0,\dotsc,q-1$) are assumed to be given. They can be precomputed using either a lower order method with smaller step sizes or an implicit Runge--Kutta method.

Due to the linearly implicit time discretisation, each time step requires the solution of a \emph{decoupled} system, solved sequentially: first solving a linear system for the geometry $\bfu^n=(\bfn^n;\bfH^n)$, then determining the velocity $\bfv^n$, then a simplified Newton iteration for $\bflambda^n$, which finally determines $\bfx^n$. For practical details, we refer to Section~\ref{section:implementation}.

The following result shows the existence of a Lagrange multiplier $\lambda^n$ and its uniqueness in a small interval around $0$, together with the convergence of the simplified Newton method under conditions that are substantially weaker than the strong regularity conditions required for the optimal-order error bounds in Theorem~\ref{theorem-fd-1} below. We need some notation.
Let $\bar\bfx^n = \bfx^{n-1} + \tau \bar \bfv^{n}$ and denote the defect in \eqref{eq:multistep area-decreasing - area} as
\begin{equation}\label{delta-n}
\delta^n := \left| \frac1\tau  \bigl(\area_h[\bar\bfx^n] - \area_h[\bfx^{n-1}]\bigr) +
        \frac1\tau \int_{t_{n-1}}^{t_n} p^n(t)^2 \,\d t \right|.
\end{equation}
We assume that 
\begin{equation}\label{sigma-n}
\sigma^n := \Big| \int_{\Gamma_h[\bar \bfx^n]} \nabla_{\Gamma_h[\bar \bfx^n]}\cdot \bar v_h^n \Big| >0,
\end{equation}
where $\bar v_h^n \in S_h[\bar \bfx^n]^3$ is the finite element function 
on $\Gamma_h[\bar \bfx^n]$ with the nodal vector $\bar \bfv^n$ defined in \eqref{def-v-beta}. In view of \eqref{area-decay}, this assumption is fulfilled as long as the numerical positions and velocities remain $H^1$-close to the exact positions and velocities.


With an absolute constant $c_2$ (with $c_2\le 4$), we further define 
\begin{equation}\label{eta-n}
\eta^n := 3\biggl(\frac{c_2\, \| \nabla_{\Gamma_h[\bar\bfx^n]}\bar v_h^n \|_{L^2(\Gamma_h[\bar \bfx^n])^{3\times 3}}}{\sigma^n} \biggr)^2.
\end{equation}

\begin{proposition}[Existence of the Lagrange multiplier] \label{prop:lambda} In the above situation,
assume that the step size $\tau$ and 
the defect $\delta^n$ are small enough that 
\begin{align}\label{gamma}
&\tau\delta^n\eta^n\le \tfrac14, \text{ \ so that \ $\tau\delta^n\eta^n=\gamma^n(1-\gamma^n)$ for some $\gamma^n\in(0,\tfrac12]$,}
\\ 
\label{rho}
\text{and }\ &\rho^n := \frac{\delta^n}{\sigma^n (1-\gamma^n)}
\ \text{ be such that }\
 \tau\rho^n \| \nabla_{\Gamma_h[\bar\bfx^n]}\bar v_h^n \|_{L^\infty(\Gamma_h[\bar \bfx^n])} \le \half. 
\end{align}
Then, there exists a unique Lagrange multiplier $\lambda^n$ with $|\lambda^n|\le \rho^n$ such that 
\eqref{eq:multistep area-decreasing - x}--\eqref{eq:multistep area-decreasing - area} are satisfied. In particular, this implies that the area decreases. 
Moreover, a simplified Newton method for $\lambda^n$ with initial iterate~$0$ converges linearly to $\lambda^n$ with convergence factor $\gamma^n$.
\end{proposition}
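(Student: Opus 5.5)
We reduce the statement to a scalar fixed-point problem. Write $\bfx^n(\lambda) = \bfx^{n-1} + \tau(1+\lambda)\bar\bfv^n = \bar\bfx^n + \tau\lambda\bar\bfv^n$, and set
\[
F(\lambda) := \frac1\tau\bigl(\area_h[\bar\bfx^n+\tau\lambda\bar\bfv^n] - \area_h[\bfx^{n-1}]\bigr) + \frac1\tau\int_{t_{n-1}}^{t_n} p^n(t)^2\,\d t .
\]
Since $p^n$ depends only on $\bfH^{n-j}$ and $\wtx^{n-j}$, which are already known, it does not depend on $\lambda$. Hence \eqref{eq:multistep area-decreasing - x}--\eqref{eq:multistep area-decreasing - area} are equivalent to $F(\lambda)=0$, with $|F(0)|=\delta^n$. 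By the Leibniz formula (as in \eqref{eq:discrete MCF energy decay - b}),
\[
F'(\lambda) = \int_{\Gamma_h[\bfx^n(\lambda)]}\nabla_{\Gamma_h[\bfx^n(\lambda)]}\cdot \bar v_h^n ,
\]
where $\bar v_h^n$ is the function with nodal vector $\bar\bfv^n$ on the moved surface; in particular $|F'(0)|=\sigma^n>0$. The simplified Newton iteration is $\lambda_{k+1}=\lambda_k - F(\lambda_k)/F'(0)$ with $\lambda_0=0$, i.e.\ fixed-point iteration for $G(\lambda)=\lambda - F(\lambda)/F'(0)$. The plan is to apply the contraction mapping theorem to $G$ on the interval $I=[-\rho^n,\rho^n]$.

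The key estimate is a Lipschitz bound on $F'$, that is, a bound on $F''$. Differentiating the area a second time along the linear path $x_h+\tau\lambda \bar v_h$ gives
\[
F''(\lambda) = \tau\int_{\Gamma_h[\bfx^n(\lambda)]}\Bigl((\nabla_\Gamma\cdot \bar v_h)^2 + |(\nabla_\Gamma \bar v_h)^{\!\perp}|^2 - \mathrm{tr}\bigl((\nabla_\Gamma \bar v_h)^2\bigr)\Bigr)
\]
(the standard second variation of area, since $\mat_h \bar v_h=0$ along the path). Its modulus is bounded by $\tau c\,\|\nabla_{\Gamma_h[\bfx^n(\lambda)]}\bar v_h^n\|_{L^2}^2$. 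To transfer norms from $\Gamma_h[\bfx^n(\lambda)]$ back to $\Gamma_h[\bar\bfx^n]$, use condition \eqref{rho}: for $|\lambda|\le\rho^n$ the perturbation satisfies $\tau|\lambda|\,\|\nabla\bar v_h^n\|_{L^\infty}\le\frac12$. By the usual equivalence of norms on surfaces related by a displacement with small $W^{1,\infty}$ norm (cf.\ \cite{KLLP2017}), gradients and area elements then change by bounded factors. This yields $|F''(\lambda)|\le \tau c_2^2\|\nabla\bar v_h^n\|_{L^2(\Gamma_h[\bar\bfx^n])}^2$, with the absolute constant $c_2$ absorbing these factors. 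The factor $3$ in $\eta^n$ accounts for the matrix-trace terms. Altogether,
\[
|F'(\lambda)-F'(0)|\le \tau\,\eta^n(\sigma^n)^2|\lambda|/ \bigl(\text{const}\bigr).
\]
I expect this step to be the main obstacle: pinning down the constants $c_2\le 4$ and $3$ requires care with the norm-equivalence factors under $\tau|\lambda|\,\|\nabla \bar v_h\|_{L^\infty}\le \frac12$.

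With this bound, for $\lambda,\mu\in I$ we get
\[
|G'(\lambda)| = |F'(\lambda)-F'(0)|/\sigma^n \le \tau\eta^n\sigma^n\rho^n = \tau\eta^n\delta^n/(1-\gamma^n) = \gamma^n ,
\]
using \eqref{gamma}. For the self-map property,
\[
|G(\lambda)| \le |G(0)| + \gamma^n|\lambda| \le \delta^n/\sigma^n + \gamma^n\rho^n = \rho^n(1-\gamma^n) + \gamma^n\rho^n = \rho^n .
\]
So $G$ maps $I$ into itself and is a contraction with factor $\gamma^n\le\frac12$. Banach's fixed-point theorem then gives existence and uniqueness of $\lambda^n\in I$, together with linear convergence of the simplified Newton iterates from $0$ with factor $\gamma^n$. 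Finally, the area decrease follows directly from \eqref{eq:multistep area-decreasing - area}, since its right-hand side is nonpositive.
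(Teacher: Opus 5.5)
Your proposal is correct and follows essentially the same route as the paper: the same scalar function $F$, the same simplified-Newton/contraction argument on $[-\rho^n,\rho^n]$, and the same norm transfer via \cite{KLLP2017} under \eqref{rho}. Your second-variation formula for $F''$ is exactly what the paper obtains from the Leibniz formula combined with the Dziuk--Kr\"oner--M\"uller formula for $\mat\nabla_\Gamma$, and the constant bookkeeping you flagged is harmless: in an orthonormal tangent frame your integrand equals $2\det B+|c|^2$ (with $B$ the tangential $2\times 2$ block and $c$ the normal part of the surface gradient of $\bar v_h^n$), which is bounded pointwise by $|\nabla_{\Gamma}\bar v_h^n|_F^2$, so the factor $3$ in $\eta^n$ more than suffices.
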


\begin{proof}
    In the proof it is convenient to omit the ubiquitous superscript $n$. We aim to find a zero of the function
    $$
    f(\lambda) = \frac1\tau  \bigl(\area_h[\bar\bfx+\tau\lambda \bar\bfv] - \area_h[\bfx^{n-1}]\bigr) +
        \frac1\tau \int_{t_{n-1}}^{t_n} p(t)^2 \,\d t
    $$
    and note that $|f(0)|=\delta$ and $|f'(0)|=\sigma$, see \eqref{delta-n} and \eqref{sigma-n}.
    We consider the simplified Newton iteration
    $$
    \lambda_{j+1} = \lambda_j - \frac{f(\lambda_j)}{f'(0)}, \qquad j\ge 0,
    $$
    with initial value $\lambda_0=0$. This is a fixed-point iteration for the map $\lambda\mapsto \varphi(\lambda):=\lambda - f'(0)^{-1}f(\lambda)$. We show that $\varphi$ is a contraction in the closed ball of radius 
    $$
    \rho=\frac{|f'(0)^{-1}f(0)|}{1-\gamma} = \frac{\delta} {\sigma (1-\gamma)}.
    $$ 
    For this we prove that the derivative satisfies
    $$
    |\varphi'(\lambda)| =|f'(0)^{-1}(f'(0)-f'(\lambda))| \le \gamma <1 
    \quad\text{for }\ |\lambda|\le \rho.
    $$
    For ease of notation, we abbreviate in the following for a fixed $\lambda$
    $$
    \Gamma_h^\theta = \Gamma_h[\bar \bfx+\theta\tau\lambda \bar\bfv]
    \quad\text{and}\quad \bar v_h^\theta = \bar v_h [\bar \bfx+\theta\tau\lambda \bar\bfv]
    \quad\text{for}\quad 0\le \theta \le 1,
    $$
    where $\bar v_h[\bar \bfx+\theta\tau\lambda \bar\bfv]\in S_h[\bar \bfx+\theta\tau\lambda \bar\bfv]$ is the finite element function on $\Gamma_h[\bar \bfx+\theta\tau\lambda \bar\bfv]$ with the nodal vector $\bar \bfv$.
    We have
    \begin{align*}
    f'(\lambda) -f'(0) &=  \int_{\Gamma_h^1} \nabla_{\Gamma_h^1}\cdot \bar v_h^1 -  \int_{\Gamma_h^0} \nabla_{\Gamma_h^0}\cdot \bar v_h^0
    \\
    &= 
     \int_0^1 \frac{\d}{\d \theta}\int_{\Gamma_h^\theta} \nabla_{\Gamma_h^\theta}\cdot \bar v_h^\theta \ \d \theta
    \\
    &=    \int_0^1 \int_{\Gamma_h^\theta} \biggl(\tau\lambda\bigl(\nabla_{\Gamma_h^\theta}\cdot \bar v_h^\theta \bigr)^2    + \mat_\theta \bigl(\nabla_{\Gamma_h^\theta}\cdot \bar v_h^\theta \bigr) \biggr)
    \,d\theta,
    \end{align*}
    where the last equality is the Leibniz formula.
     Using the formula for $\mat \nabla_\Gamma v$ in \cite[Lemma 2.6]{DziukKronerMuller}, we find
     $$
     |\mat_\theta \bigl(\nabla_{\Gamma_h^\theta}\cdot \bar v_h^\theta \bigr)| \le 2\tau|\lambda|\, |\nabla_{\Gamma_h^\theta} \bar v_h^\theta|_F^2,
     $$
     where $|\cdot|_F$ is the Frobenius norm of a matrix. This yields the bound
    \begin{align*}
    |f'(\lambda) -f'(0)| &\le  3\tau|\lambda| \int_0^1 \| 
    \nabla_{\Gamma_h^\theta} \bar v_h^\theta \|_{L^2(\Gamma_h^\theta)^{3\times 3}}^2\, \d \theta.
    \end{align*}
    Lemma 4.3 in \cite{KLLP2017} yields that the condition
    $$
   \tau |\lambda| \| \nabla_{\Gamma_h^0}\bar v_h \|_{L^\infty(\Gamma_h^0)} \le \half, 
    $$
    which is implied by \eqref{rho} for $|\lambda|\le \rho$, guarantees norm equivalence on all intermediate surfaces, i.e.,
    $$
    \| 
    \nabla_{\Gamma_h^\theta} \bar v_h^\theta \|_{L^2(\Gamma_h^\theta)} \le c_2 
    \| 
    \nabla_{\Gamma_h^0} \bar v_h^0 \|_{L^2(\Gamma_h^0)} =:\beta .
    $$
    So we obtain 
    $$
    |f'(\lambda) -f'(0)|  \le  3 \tau |\lambda|  \, \beta^2.
    $$
    Using \eqref{gamma} and \eqref{rho}, we therefore have for $|\lambda|\le \rho$,
    \begin{align*}
    &|\varphi'(\lambda)|=|f'(0)^{-1}(f'(0)-f'(\lambda))| \le \frac 1\sigma \, 3\tau\,\frac{\delta} {\sigma (1-\gamma)} \,\beta^2
    =\frac{\tau\delta\eta}{1-\gamma} =\gamma \le \half
    \\
    \text{and } \quad &|\varphi(\lambda)| \le |\varphi(\lambda)-\varphi(0)| + |\varphi(0)| \le \gamma\rho + \frac\delta\sigma =\rho.
    \end{align*}
    This shows that $\varphi:[-\rho,\rho]\to [-\rho,\rho]$ is a contraction. The Banach fixed-point theorem then yields the result.
    \qed
\end{proof}
\ecl


The following result states that the above area-decreasing fully discrete method satisfies optimal-order error bounds in the $H^1$-norm of the same type as the original full discretization in \cite{MCF}, which is not guaranteed to decrease the area under weak regularity assumptions.

\begin{theorem}[Error of the area-decreasing fully discrete method without tangential motion] \label{theorem-fd-1} 
    Assume that the mean curvature flow problem \eqref{KLL-lambda} admits a smooth solution consisting of $X:\Gamma^0\times[0,T]\rightarrow\mathbb{R}^3$, $(v,n,H):\Gamma(t)\rightarrow\mathbb{R}^3\times\mathbb{R}^3\times \mathbb{R}$ and $\lambda=0$ for $t\in[0,T]$, with $X(\cdot,t):\Gamma^0\rightarrow\Gamma(t)$ being a smooth diffeomorphism for $t\in[0,T]$.

    For any constant $c_0$, there exists a positive constant $h_0$ such that the fully discrete area-decreasing algorithm \eqref{eq:multistep area-decreasing}, using finite elements of polynomial degree~${k \geq 2}$ and $q$-step BDF and $q$-
    step implicit Adams methods with $2\le q\le 5$, admits a unique solution on the time interval $[0,T]$ when $\tau\leq c_0 h$ 
    and $h\le h_0$ (with a locally unique solution $\lambda^n$). 
    
  In the following, let the superscript $L$ denote the lift of a function from the triangulated surface $\Ga_h[\bfx^n]$ to the exact surface $\Ga(t_n)=\Ga(X(\cdot,t_n))$, as defined in \cite{MCF}. If the starting values are sufficiently accurate in the $H^1$-norm, i.e., for $j=1,\ldots,q-1$, 
\begin{align*}
\|(x_h^j)^L - {\rm id}_{\Gamma(t_j)} \|_{H^1(\Gamma(t_j))}
&\le c \, (h^k+\tau^q), \\
\|(n_h^j)^L - n(\cdot,t_j) \|_{H^1(\Gamma(t_j))}
+ \|(H_h^j)^L - H(\cdot,t_j) \|_{H^1(\Gamma(t_j))} 
&\le c \, (h^k+\tau^q), \\
\tau^{1/2}
\Big\|
\frac{(x_h^j)^L-{\rm id}_{\Gamma(t_j)}}{\tau}
\Big\|_{H^1(\Gamma(t_j))}
&\le c \, (h^k+\tau^q),
\end{align*}
then the following optimal-order error bounds hold for $t_n\in[0,T]$ with $n\geq q${\rm:} 
    \begin{subequations}\label{THM1-fd-error}
    \begin{align}
       \|(x_h^n)^L - \mathrm{id}_{\Gamma(t_n)}\|_{H^1(\Ga(t_n))^3} &\leq c \, (h^k+\tau^q), \\
        \|(v_h^n)^L - v(\cdot,t_n)\|_{H^1(\Ga(t_n))^3} 
        & \leq c \, (h^k+\tau^q), \\
        \|(n_h^n)^L - n(\cdot,t_n)\|_{H^1(\Ga(t_n))^3} 
        & \leq c \, (h^k+\tau^q), \\
        \|(H_h^n)^L - H(\cdot,t_n)\|_{H^1(\Ga(t_n))} 
        & \leq c \, (h^k+\tau^q) , \\
        |\lambda^n| & \leq c \, (h^k+\tau^q) ,
    \end{align}
    \end{subequations}
    where the constants $c$ are independent of $h$ and $\tau$ and $n$ with $n\tau\le T$, but depend on bounds of derivatives of the solution and on the length $T$ of the time interval. Moreover, the
simplified Newton method for $\lambda^n$ with initial iterate 0 converges linearly 
with a convergence factor $O(\tau(h^k+\tau^q))$.
\end{theorem}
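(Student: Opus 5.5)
We plan to reuse the stability and consistency analysis of \cite{MCF} (in the version with $\bfv = -\bfH\bullet\bfn$ from \cite{Willmore}), treating the Lagrange multiplier as a small perturbation of the position update. The only new ingredient is a bound for $|\lambda^n|$ in terms of the errors. We define the errors $\ex^n=\bfx^n-\xs(t_n)$, $\ev^n$ and $\eu^n$ relative to the interpolated exact solution. The exact solution with $\lambda=0$ satisfies the scheme \eqref{eq:multistep area-decreasing} up to defects $\dx^n,\dv^n,\du^n$, and these are $O(h^k+\tau^q)$ in the appropriate dual and $H^1$ norms. This is as in \cite{MCF}, since BDF has order $q$ and the implicit Adams method has order $q+1$. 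The area equation \eqref{eq:multistep area-decreasing - area} has its own consistency defect: at the exact interpolated data, both $\area_h[\xs(t_n)]-\area_h[\xs(t_{n-1})]$ and $-\int_{t_{n-1}}^{t_n}(p^*)^2$ approximate $-\int_{t_{n-1}}^{t_n}\|H\|^2_{L^2}$. By \eqref{area-decay}, polynomial interpolation of degree $q$, and geometric approximation estimates, this defect is $\tau\,O(h^k+\tau^{q})$, hence $O(h^k+\tau^q)$ after division by $\tau$. Throughout we use the usual induction/bootstrap hypothesis: the errors satisfy $\|\ex^j\|_{W^{1,\infty}}+\|\eu^j\|_{W^{1,\infty}}\le h^{(k-1)/2}$ for $j<n$, obtained from an inverse estimate together with $\tau\le c_0h$ and $k\ge2$. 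This hypothesis guarantees norm equivalence between the surfaces $\Gamma_h[\bfx^j]$, $\Gamma_h[\wtx^j]$, $\Gamma_h[\xs(t_j)]$.

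\emph{Step 1: existence and bound for $\lambda^n$.} Under the bootstrap hypothesis we verify the assumptions of Proposition~\ref{prop:lambda}. First, $\bar\bfv^n$ is $H^1$-close to the exact velocity, so $\sigma^n$ is close to $\int_\Gamma H^2$. This is bounded below by a positive constant, because $\Gamma(t)$ is a smooth closed surface and cannot have $H\equiv0$. Second, $\eta^n$ is bounded and $\|\nabla\bar v_h^n\|_{L^\infty}$ is bounded. Third, $\delta^n$ is controlled by expanding the area of $\bar\bfx^n=\bfx^{n-1}+\tau\bar\bfv^n$ around the exact quantities. Subtracting the exact relation and using that the area difference is a Lipschitz functional in $H^1$ gives
\[
\delta^n\le C\Bigl(\|\ex^{n-1}\|_{H^1}/\tau\cdot(\text{difference structure}) + \sum_{j=0}^q\|\ev^{n-j}\|_{H^1}+\sum_{j=0}^{q}\|\eu^{n-j}\|_{L^2}+ h^k+\tau^q\Bigr).
\]
Here one must be careful: the difference $\area_h[\bar\bfx^n]-\area_h[\bfx^{n-1}]$ should be written as $\tau\int_0^1\int\nabla\cdot\bar v_h$ on intermediate surfaces. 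Its deviation from the exact counterpart is then $\tau\,C(\|\ex^{n-1}\|_{H^1}+\|\bar\bfe_\bfv^n\|_{H^1})$, not $\|\ex\|/\tau$. This yields $\delta^n\le C(\|\ex^{n-1}\|_{H^1}+\sum_j\|\ev^{n-j}\|_{H^1}+\sum_j\|\eu^{n-j}\|_{H^1}+h^k+\tau^q)$. In particular $\delta^n$ is small, so \eqref{gamma} and \eqref{rho} hold, and Proposition~\ref{prop:lambda} gives a unique $\lambda^n$ with $|\lambda^n|\le\rho^n\le C\delta^n$. Moreover $\gamma^n\approx\tau\delta^n\eta^n=O(\tau(h^k+\tau^q))$, which proves the final claim about the Newton convergence factor once the error bounds are established.

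\emph{Step 2: error equations and energy estimates.} The $\bfu$-equation \eqref{eq:multistep area-decreasing - u} does not involve $\lambda$, so the BDF energy estimate of \cite{MCF} (via Dahlquist's G-stability/Nevanlinna--Odeh multipliers) applies verbatim. The velocity equation \eqref{eq:multistep area-decreasing - v} gives $\|\ev^n\|_{H^1}\le C\|\eu^n\|_{H^1}$ plus geometric terms. The position error equation reads
\[
\tfrac1\tau(\ex^n-\ex^{n-1})=\sum_j\beta_j\ev^{n-j}+\lambda^n\bar\bfv^n-\dx^n.
\]
The extra term $\lambda^n\bar\bfv^n$ has $H^1$ norm $\le C|\lambda^n|$, which Step 1 bounds by the errors at previous and current levels plus $h^k+\tau^q$. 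It therefore enters exactly like the existing $\ev$ terms, and a discrete Gronwall argument closes the estimate. Note that $\lambda^n$ depends on $\ev^n,\eu^n$ at the current step; since $\bfu^n$ is computed before $\lambda^n$, this is harmless. The resulting bound is $\|\ex^n\|_{H^1}+\|\eu^n\|_{H^1}+\|\ev^n\|_{H^1}\le C(h^k+\tau^q)$, and then $|\lambda^n|\le C(h^k+\tau^q)$. The error bound makes the bootstrap hypothesis hold at step $n$ via inverse estimates, completing the induction. Lifting to $\Gamma(t_n)$ and adding interpolation errors yields \eqref{THM1-fd-error}.

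\emph{Main obstacle.} The delicate step is Step 1: proving the Lipschitz-type bound $|\lambda^n|\le C(\text{errors}+h^k+\tau^q)$ with \emph{no} negative power of $\tau$. This requires expressing the area difference as a time-integral of $\int\nabla\cdot\bar v_h$ over intermediate surfaces. It also requires showing that the quadrature of $\int p^2$ matches $\int_{t_{n-1}}^{t_n}\|H\|^2$ to order $\tau^{q+1}$ with errors entering only through $\|\eu\|_{L^2}$ and $\|\ex\|_{H^1}$ (via the mass matrices $\bfM(\wtx^{n-j})$). The lower bound on $\sigma^n$ is also needed here.
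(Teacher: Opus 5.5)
Your proposal is correct and follows essentially the paper's route. It uses induction on the error size, the unchanged BDF stability bound for $\eu$ plus $\|\ev^n\|_{H^1}\le c\|\eu^n\|_{H^1}+\ldots$, and the bound $|\lambda^n|\le c\,\delta^n$ from Proposition~\ref{prop:lambda}. There $\sigma^n$ is bounded below via $\int_{\Gamma(t)}H^2>0$, and $\delta^n$ is compared with $\delta^n_*$ through the Leibniz formula on the intermediate surfaces $\Gamma_h[\bfx^{n-1}+\theta\tau\bar\bfv^n]$, so no negative power of $\tau$ appears. Finally, the perturbed position error equation is closed by discrete Gronwall.

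There are two small slips to fix:
\begin{itemize}
\item With $\tau\le c_0h$, the inverse estimate only gives $W^{1,\infty}$ errors $O(h^{\min(k,q)-1})$. So your threshold $h^{(k-1)/2}$ does not reproduce itself when $k\ge 2q-1$. Take $h^{1/2}$ instead, i.e.\ $H^1$ errors $\le h^{3/2}$, as the paper does.
\item $\eu$ should be measured against the Ritz projection of $u$ rather than its nodal interpolant. Testing with the discrete time derivative of $\eu$ requires $\du=O(h^k+\tau^q)$ in $L^2$, not just in a dual norm.
\end{itemize}
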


\subsection{Area-decreasing fully discrete FEM with tangential velocity}

Analogous to \eqref{eq:multistep area-decreasing}, while using the matrix-vector formulation in \eqref{eq:MCF-matrix-vector-MDR}, the fully discrete method with tangential velocity can be described as follows. For $n\ge q$, we determine the approximations $\bfx^n$ to $\bfx^*(t_n)$, $\bfv^n$ to $\bfv^*(t_n)$, $\boldsymbol{\mu}^n$ to $\boldsymbol{\mu}^*(t_n)$, $\bfu^n$ to $\bfu^*(t_n)$ by the fully discrete system of linear equations:
\begin{subequations}
	\label{eq:multistep-MDR}
	\begin{align} 
        \label{eq:multistep-MDR - u}
        \bfM(\wtx^n) {\bf\dot\bfu}^n + \bfA(\wtx^n) \bfu^n = &\  \bff(\wtx^n,\wtu^n,\wtv^n) , \\[5pt]
        \label{eq:multistep-MDR - v2}
        \begin{pmatrix}
            {\bf A}(\wt\bfx^n) &  {\bf B}(\wt\bfx^n,\bfn^n)^{\rm T}
            \\
            {\bf B}(\wt\bfx^n,\bfn^n)  & \bfzero
        \end{pmatrix}
        \begin{pmatrix}
            \bfv^n \\ \boldsymbol{\mu}^n
        \end{pmatrix}
        =&\
        \begin{pmatrix}
            \bfzero \\ -  {\bf M}(\wt\bfx^n)\bfH^n
        \end{pmatrix}, \\[5pt]
        \label{eq:multistep-MDR - x}
        \frac{1}{\tau} \big( \bfx^n - \bfx^{n-1} \big) = &\ (1 + \lambda^n) {\bar\bfv}^n , \\
        \label{eq:multistep-MDR - area}
         \area_h[\bfx^n] - \area_h[\bfx^{n-1}] 
        = &\ - \int_{t_{n-1}}^{t_n} p^n(t)^2 \, \d t ,
	\end{align}
\end{subequations} 
where $\widetilde{\bf v}^n=\sum_{j=0}^{q-1}\gamma_j {\bf v}^{n-1-j}$ and ${\bar\bfv}^n:=\sum_{j=0}^{q} \beta_j \bfv^{n-j} $ and $p^n(t)$ is again the interpolation polynomial through the points 
$(t_{n-j},\|\bfH^{n-j}\|_{\bfM(\wtx^{n-j})})$ for $0\le j \le q-1$. The solvability of the saddle point system \eqref{eq:multistep-MDR - v2} can be shown through error analysis for sufficiently small $h$, as shown in \cite{Hu-Li-2022}.

\begin{theorem}[Error of the area-decreasing method with tangential motion] \label{theorem-fd-2} 
 Assume that the mean curvature flow problem \eqref{KLL2-lambda} (with the MDR tangential motion) admits a smooth solution $X:\Gamma^0\times[0,T]\rightarrow\mathbb{R}^3$ and $(v,n,H,\mu):\Gamma(t)\rightarrow\mathbb{R}^3\times\mathbb{R}^3\times \mathbb{R}\times \mathbb{R}$ for $t\in[0,T]$, with $X(\cdot,t):\Gamma^0\rightarrow\Gamma(t)$ being a smooth diffeomorphism for $t\in[0,T]$. Then the result of Theorem~\ref{theorem-fd-1} holds verbatim also for method \eqref{eq:multistep-MDR} (the error bound for $\mu$ is not included).

\end{theorem}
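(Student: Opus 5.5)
Since Theorem~\ref{theorem-fd-2} is to hold verbatim as Theorem~\ref{theorem-fd-1}, I would reuse the proof structure of Theorem~\ref{theorem-fd-1} (given in Section~6) and replace the stability and consistency estimates of the \cite{MCF} scheme by those of the MDR scheme from \cite{Hu-Li-2022}. The argument is an induction on $n$ with a bootstrap. Assume the errors $\ex^j,\eu^j,\ev^j$ (nodal differences with the interpolated exact solution $\xs,\bfu^*,\bfv^*$) and $|\lambda^j|$ are bounded by $C(h^k+\tau^q)$ for $j<n$. Together with $\tau\le c_0h$ and inverse inequalities, this gives $W^{1,\infty}$-closeness of $\Gamma_h[\wtx^n]$, $\Gamma_h[\bar\bfx^n]$ and the discrete velocities to their exact counterparts. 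First, by the stability of the saddle point system \eqref{eq:multistep-MDR - v2} (an inf-sup condition for $\bfB(\wtx^n,\bfn^n)$, proven in \cite{Hu-Li-2022} for $\bfn^n$ close to the exact normal), the velocity error satisfies $\|\ev^n\|_{\bfA+\bfM}\le c(\|\eu^n\|_{H^1}+\|\wt\bfe_\bfx^n\|_{H^1}+h^k+\tau^q)$. This replaces the pointwise bound for $\bfv^n=-\bfH^n\bullet\bfn^n$ used in the proof of Theorem~\ref{theorem-fd-1}.

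Second, I would estimate $\lambda^n$ exactly as in Section~6. The exact solution has $\lambda=0$, and $\int_\Gamma\nabla_\Gamma\cdot v=-\int_\Gamma H^2$ holds also with tangential $v$, since $\int_\Gamma\nabla_\Gamma\cdot v^{\rm tan}=0$ on a closed surface. Hence the defect $\delta^n$ in \eqref{delta-n}, evaluated along the numerical solution, is bounded by $c(\|\ex^{n-1}\|_{H^1}+\sum_j\|\ev^{n-j}\|_{H^1}+\sum_j\|\eu^{n-j}\|_{L^2}+h^k+\tau^q)$. Here the implicit Adams and interpolation-polynomial quadratures contribute $O(\tau^{q})$ consistency errors, and the area functional is Lipschitz in $H^1$ of the positions. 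Likewise $\sigma^n\ge\frac12\int_\Gamma H^2>0$ and $\eta^n=O(1)$. Proposition~\ref{prop:lambda} then gives a unique $\lambda^n$ with $|\lambda^n|\le\rho^n\le c\,\delta^n$, and the Newton factor $\gamma^n=O(\tau\delta^n)=O(\tau(h^k+\tau^q))$.

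Third, the position update \eqref{eq:multistep-MDR - x} is an implicit Adams step perturbed by $\tau\lambda^n\bar\bfv^n$. Since $\bar\bfv^n$ is bounded in $W^{1,\infty}$, this perturbation is of size $\tau|\lambda^n|\le c\tau(\text{errors}+h^k+\tau^q)$, which is harmless for a discrete Gronwall argument. The $\bfu$-equation with the extra convection term $(\wt v_h\cdot\nabla)\wt u_h$ is handled by the energy estimates of \cite{Hu-Li-2022}, using BDF multiplier techniques (Dahlquist/Nevanlinna--Odeh) for $q\le5$. Combining the three error equations and absorbing the $\ev$ and $\lambda$ contributions yields the bound $c(h^k+\tau^q)$. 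This closes the bootstrap for $h\le h_0$, and the lift and interpolation estimates give \eqref{THM1-fd-error}.

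I expect the main obstacle to be the coupling in the first two steps. The bound for $\lambda^n$ needs $\ev^n$ in $H^1$, and for MDR that bound comes only through the saddle point system and requires $\eu^n$ in $H^1$, which in turn comes from the parabolic energy estimate whose right-hand side involves $\wt\bfv^n$. To break the cycle, I would use that the position update only needs $\lambda^n$ and $\ev$ at the current step multiplied by $\tau$. I would estimate the $\bfu$-equation first, with the extrapolated $\wt\bfv^n$ taken from previous steps, then $\ev^n$, then $\lambda^n$, and finally $\ex^n$. This mirrors the sequential solve order of the scheme.
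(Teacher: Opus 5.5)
Your proposal follows the paper's proof essentially step for step. The only new ingredient relative to Theorem~\ref{theorem-fd-1} is the $H^1$ velocity stability bound imported from \cite{Hu-Li-2022}; the paper obtains it through the reformulated constraint \eqref{Mform-Dv}, with the defect $\widetilde\bfd_\bfv^n$ measured in $H^{-1}$, rather than a bare inf-sup argument. Otherwise the Lagrange-multiplier bound of Lemma~\ref{lem:lambda-bound}, the perturbed Adams position estimate, the unchanged $\bfu$-estimate and the order $\bfu^n\to\ev^n\to\lambda^n\to\ex^n$ carry over exactly as you describe, and your remark that tangential velocity does not contribute to $\int_\Gamma\nabla_\Gamma\cdot v$ is the reason, left implicit in the paper, why $\delta_*^n$ and $\sigma^n$ behave as before. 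One small correction: your bound for $\delta^n$ should also contain the extrapolated position errors $\wt\bfe_\bfx^{n-j}$, since $p^n$ is built from norms on $\Gamma_h[\wtx^{n-j}]$.
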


\section{Proof of Theorem~\ref{theorem-fd-1}}
\label{Section:Proof-fd-1}

The proof is a modification of the proof of the fully discrete error bounds in \cite{MCF}. The only new aspect is the effect on the stability of the discrete Lagrange multipliers $\lambda^n$  of the numerical method, which is shown to be mild. We focus on this new aspect and recall results and techniques from the stability and consistency analysis of \cite{MCF} as far as needed.

\subsection{Preparations}
\label{subsec:prep}

\subsubsection{Notation of norms of finite element functions}
For the simplicity of notation, we identify a nodal vector ${\bf w}$ with the corresponding finite element function on a specified triangulated surface, such as $\Gamma_h[\bfx]$ or $\Gamma_h[\bfx^*]$, whenever we consider the norm 
of the finite element function on the triangulated surface. For example, for $s\in\{0,1\}$ and $1\le p\le\infty$, we denote by $\| {\bf w} \|_{H^s(\Gamma_h[\bfx])} $ and $\| {\bf w} \|_{W^{s,p}(\Gamma_h[\bfx])} $ the Sobolev norms of the finite element function on $\Gamma_h[\bfx]$ having nodal vector ${\bf w}$. In particular,  
\begin{subequations}\label{norm-w}
\begin{align}
\| {\bf w} \|_{L^2(\Gamma_h[\bfx])} 
&= \sqrt{{\bf w}^{\rm T}\bfM(\bfx){\bf w}} ,\\
\| \nabla_{\Gamma_h[\bfx]}{\bf w} \|_{L^2(\Gamma_h[\bfx])} 
&= \sqrt{{\bf w}^{\rm T}\bfA(\bfx){\bf w}} ,\\
\| {\bf w} \|_{H^1(\Gamma_h[\bfx])} 
&= \sqrt{{\bf w}^{\rm T}\bigl(\bfM(\bfx)+\bfA(\bfx)\bigr){\bf w}}, 
\end{align}
\end{subequations}
for the matrices $\bfM(\bfx)$ and $\bfA(\bfx)$ defined in \eqref{def-matrix-M-A}.

\subsubsection{Finite element interpolation of position and velocity}
Let $\bfx^*(t)=(x_1^*(t), \cdots, x_\dof^*(t))$ be the image of $\bfx^0$ under the exact flow map $X(\cdot,t):\Gamma^0\rightarrow\Gamma(t)$, i.e., $x_j^*(t)=X(x_j^0,t)$ for $j=1,\dots,\dof$. Thus the nodal vector $\bfx^*(t)$ determines a triangulated surface $\Gamma_h[\bfx^*(t)]$ which interpolates $\Gamma(t)$. The nodal values of $v(\cdot,t)$  at the interpolated nodes $x_j^*(t)$, $j=1,\dots,\dof$, are collected into the nodal vector $\bfv^*(t)$. We note that $\dot\bfx^*=\bfv^*$. We denote $\bfx_*^n={\bfx}^*(t_n)$ and $\bfv_*^n={\bfv}^*(t_n)$.

\subsubsection{Ritz projection of normal vector and mean curvature}
For the solution $u=(n;H) \in H^1(\Ga[X])^4$, we denote by $u_h^* \in S_h[\xs]^4$ the Ritz projection of $u$, i.e., the unique finite element solution of the following weak formulation: 
	\begin{equation}
		\label{eq:Ritzmap}
		\begin{aligned}
			& \int_{\Ga_h[\xs]} \!\!  u_h^* \, \vphi_h + \int_{\Ga_h[\xs]} \!\! \nb_{\Ga_h[\xs]} u^*_h \cdot \nb_{\Ga_h[\xs]} \vphi_h \\
			&=  \int_{\Ga[X]} \!\! u \vphi_h^\ell + \int_{\Ga[X]} \!\! \nb_{\Ga[X]} u \cdot \nb_{\Ga[X]} \vphi_h^\ell 
            \qquad \forall\, \vphi_h \in S_h[\xs]^4. 
		\end{aligned}
	\end{equation}
Such Ritz projections were used in \cite[Definition~6.1]{DziukElliott_L2}, \cite[Definition~6.1]{highorderESFEM}, \cite[Definition~3.6]{ElliottRanner_unified} and played an important role in the error analysis of \cite{MCF}. We recall that (cf.~\cite[Theorem~3.1--3.2]{KovacsPower_quasilinear})
\begin{eqnarray}  
	\label{eq:ritzest}
	\Vert (u^*_h)^\ell - u \Vert_{L^2(\Ga[X])^4} + h \Vert (u^*_h)^\ell - u \Vert_{H^1(\Ga[X])^4} \leq C h^{k+1},  \\
	\Vert (\partial^\bullet_h u^*_h)^\ell - \partial^\bullet u \Vert_{L^2(\Ga[X])^4} + h \Vert (\partial^\bullet_h u^*_h)^\ell - \partial^\bullet u \Vert_{H^1(\Ga[X])^4} \leq C h^{k+1}.  \label{eq:ritzesttime}
\end{eqnarray}
Here $\ell$ is the lift from the interpolated surface to the exact surface, as introduced for linear and higher-order surface approximations in \cite{Dziuk88} and \cite{demlow2009higher}, respectively.
Let $\uls^n={\bfu}^*(t_n)$ be the nodal vector associated with the Ritz projection $u_h^*(\cdot,t_n)$. 

\subsubsection{Equivalences of neighbouring norms}
We have the norm-equivalence relation with constants independent of $h$ and~$\tau$  (when $\tau$ is sufficiently small):
$$
\|\bfw\|_{H^1(\Gamma_h[\bfx_*(t)])}
\simeq \|\bfw\|_{H^1(\Gamma_h[\bfx_*^n])}
\quad\mbox{for}\,\,\, t\in[t_{n-1},t_n] , 
$$
which holds as long as $\|\bfx_*(t)-\bfx_*^n\|_{W^{1,\infty}(\Gamma_h[\bfx_*^n])} \le c\tau\le \frac12$  \cite[Lemma 4.3]{KLLP2017}. For the same reason, we also have the following norm equivalence:
\begin{equation}\label{norm-equiv-n-j}
\|\bfw\|_{H^1(\Gamma_h[\bfx_*^n])}
\simeq \|\bfw\|_{H^1(\Gamma_h[\bfx_*^{n-j}])}
\quad\mbox{for}\,\,\, j=1,\dots,q-1. 
\end{equation}
Analogous norm equivalences hold for the corresponding $L^2$-norms.

\subsection{Error equations}

We can write down the following matrix-vector equations satisfied by the interpolated values $\xls^n={\bfx}^*(t_n)$, $\vls^n={\bfv}^*(t_n)$ and Ritz-projected values $\uls^n= ({\bfn}_*^n;\bfH_*^n)={\bfu}^*(t_n)$ of the exact solution: 
\begin{align*} 
    \bfM(\wtx_*^n) {\bf\dot\bfu}_*^n + \bfA(\wtx_*^n) \bfu_*^n = &\  \bff(\wtx_*^n,\wtu_*^n) + \bfM(\wtx_*^n)\du^n , \\[3pt]
    \bfv_*^n = &\ - \bfH_*^n \bullet {\bfn}_*^n + \dv^n , \\
    \frac{1}{\tau} \big( \bfx_*^n - \bfx_*^{n-1} \big) = &\  \bar\bfv_{*}^n + {\bfd}_{\bfx}^n , 
\end{align*}
where ${\bf\dot\bfu}_*^n = \tau^{-1} \sum_{j=0}^q \delta_j \bfu_*^{n-j}$ and $\wtx_{*}^n=\sum_{j=0}^{q-1} \gamma_j \bfx_*^{n-1-j}$ and $\bar\bfv_{*}^n=\sum_{j=0}^q \beta_j \bfv_*^{n-j}$,  and
where  $\du^n$ and $\dx^n$ correspond to the defects defined in \cite[equation~(10.1)]{MCF}, and $\dv^n$ is the defect as in \cite[equation~(5.14b)]{Willmore}. The following estimates have been shown in \cite{MCF} for $\du^n$, in \cite{Willmore} for $\dv^n$,
and $\dx^n$ is the consistency error of the Adams multistep method:
\begin{align}\label{defects-du-dx-uv}
\| \du^n \|_{L^2(\Gamma_h[\bfx_*^n])} 
+ \| \dx^n \|_{H^1(\Gamma_h[\bfx_*^n])} 
+ \| \dv^n \|_{H^1(\Gamma_h[\bfx_*^n])} \le c \, (\tau^q+h^k) . 
\end{align} 

In \cite{MCF} and \cite{Willmore}, the defect estimates were shown in the $L^2(\Gamma_h[\wtx_*^n])$ and $H^1(\Gamma_h[\wtx_*^n])$ norms. The change to the $L^2(\Gamma_h[\bfx_*^n])$ and $H^1(\Gamma_h[\bfx_*^n])$ norms made here is possible because of the equivalence relation in \eqref{norm-equiv-n-j}. 

We introduce the fully discrete errors at time $t_n = n \tau$:
\begin{equation*}
    \ex^n =  \bfx^n - \xls^n, \quad \ev^n = \bfv^n - \vls^n , \quad \eu^n = \bfu^n - \uls^n .
\end{equation*}
The error equations can then be written as 
\begin{subequations}
\label{eq:error equations - full}
    \begin{align}
        \label{eq:error eq - u - full}
        \bfM(\widetilde \bfx^n){\bf\dot\bfe}_{\bfu}^n+ \bfA(\widetilde \bfx^n)\eu^n 
        = &\ -  \big( \bfM(\widetilde \bfx^n)-\bfM(\widetilde \bfx_*^n) \big) {\bf\dot\bfu}_*^n \nonumber \\[3pt]
        &\ - \big( \bfA(\widetilde \bfx^n)-\bfA(\widetilde \bfx_*^n) \big) \bfu_*^n \nonumber \\[3pt]
        &\ + \big(\bff(\widetilde \bfx^n,\widetilde \bfu^n) - \bff(\widetilde \bfx_*^n,\widetilde \bfu_*^n)\big) -  \bfM(\wtxls^n)  \du^n \\[2pt]
        \label{eq:error eq - v - full}
        \ev^n = &\ -\Big( \bfH^n \bullet \bfn^n - \Hls^n \bullet \nls^n \Big) - \dv^n , \\
        \label{eq:error eq - x - full}
        \frac{1}{\tau} \big( \ex^n - \ex^{n-1} \big) = &\  \evb^n + \lambda^n \bfvb^n  - {\bfd}_{\bfx}^n ,
    \end{align}
\end{subequations}
where $\evb^n=\sum_{j=0}^q \beta_j \ev^{n-j}$.

\subsection{Stability estimates for $\bfe_\bfu^n$ and $\bfe_\bfv^n$}

The proof of the error bounds uses an induction argument. We assume that for some positive integer $m\le T/\tau$ the following estimate holds for all integers $n$ with $1\le n\le m$:
\begin{align}\label{math-ind} 
        & \| \eu^{n-1} \|_{H^1(\Ga_h[\xls^{n-1}])} 
        + \| \ex^{n-1} \|_{H^1(\Ga_h[\xls^{n-1}])}
        + \| \ev^{n-1} \|_{H^1(\Ga_h[\xls^{n-1}])} 
        + |\lambda^{n-1}| \notag\\
        & \le h^{3/2} ,
\end{align}
where $\lambda^i = 0$ for $i=0,\dotsc,q-1$. 
Under the assumptions of Theorem \ref{theorem-fd-1}, for $\tau^q\le ch^2$ and sufficiently small $h$, \eqref{math-ind} holds at least for $m=q$. We shall prove that if this result holds for some $q\le m<T/\tau$ then it also holds for $m+1$. This will prove that \eqref{math-ind} holds for all $q\le m\le T/\tau$.

Note that by applying the finite element inverse estimates (see, e.g., \cite[Theorem~4.5.11]{BrennerScott}), the following estimates for $n\le m$ follows from \eqref{math-ind}:
\begin{align}\label{math-ind-W1inf} 
        & \| \eu^{n-1} \|_{W^{1,\infty}(\Ga_h[\xls^{n-1}])} 
        + \| \ex^{n-1} \|_{W^{1,\infty}(\Ga_h[\xls^{n-1}])}
        + \| \ev^{n-1} \|_{W^{1,\infty}(\Ga_h[\xls^{n-1}])} \notag\\
        & \le c_{1,\infty} h^{1/2} .
\end{align}
Under this condition, stability estimates for \eqref{eq:error eq - u - full} and \eqref{eq:error eq - v - full} for $q\le n\le m$ have been established in \cite[inequality (10.32)]{MCF}: 
\begin{align}\label{stability-eu-2}
\|\eu^n\|_{H^1(\Gamma_h[\bfx_*^n])}^2 
    \le &\ c\sum_{j=0}^{q-1} \| \ex^{n-1-j} \|_{H^1(\Gamma_h[\bfx_*^{n-1-j}])}^2 \notag\\
  &\ +  c\tau\sum_{j=q}^{n-1} \Bigl( \|\eu^j\|_{H^1(\Gamma_h[\bfx_*^{j}])}^2 + \|\ex^j\|_{H^1(\Gamma_h[\bfx_*^{j}])}^2 + \|\ev^j\|_{H^1(\Gamma_h[\bfx_*^{j}])}^2 \Bigr) \notag\\
&\ +
c \sum_{j=0}^{q-1} \Bigl( \|\eu^{j}\|_{H^1(\Gamma_h[\bfx_*^{j}])}^2 + \|\ex^{j}\|_{H^1(\Gamma_h[\bfx_*^{j}])}^2 \Bigr) \notag\\
&\ + c \tau \sum_{j=1}^{n-1} \|(\ex^j -\ex^{j-1})/\tau \|_{H^1(\Gamma_h[\bfx_*^{j}])}^2 \notag\\
&\ +
c \tau \sum_{j=q}^{n}\|\du^j\|_{L^2(\Gamma_h[\bfx_*^j])}^2 
\qquad\mbox{for\, $q\le n\le m$}.
\end{align}
We note that in \cite[(10.32)]{MCF} this error bound is formulated in the $H^1(\Gamma_h[\wtx^n])$ norm. In view of \eqref{math-ind-W1inf} and the norm equivalence discussed in Section~\ref{subsec:prep}, the error bound (with different constants) is valid also for the
$H^1(\Gamma_h[\bfx_*^n])$ norm as stated here.

The stability estimates for \eqref{eq:error eq - v - full} are obtained by taking the $H^1(\Gamma_h[\bfx_*^n])$ norm of both sides of \eqref{eq:error eq - v - full}. Using \cite[Lemma~5.3]{Willmore} together with \eqref{math-ind-W1inf},  we obtain
\begin{align}\label{stability-ev}
\|\ev^n \|_{H^1(\Gamma_h[\bfx_*^n])} 
&\le c\|\eu^n\|_{H^1(\Gamma_h[\bfx_*^n])} 
+ \| \dv^n \|_{H^1(\Gamma_h[\bfx_*^n])} \quad\mbox{for\, $q\le n\le m$}.
\end{align}

\subsection{Bound for $\lambda^n$}
We recall the definition of $\delta^n$ in \eqref{delta-n}, viz.,
    $$
    \delta^n = \left| \frac1\tau  \bigl(\area_h[\bar\bfx^n] - \area_h[\bfx^{n-1}]\bigr) +
        \frac1\tau \int_{t_{n-1}}^{t_n} p^n(t)^2 \,\d t \right|,
    $$
and we define the analogous quantity along the exact solution,
\begin{equation}\label{delta-star}
    \delta^n_* = \left| \frac1\tau  \bigl(\area_h[\bar\bfx^n_*] - \area_h[\bfx^{n-1}_*]\bigr) +
        \frac1\tau \int_{t_{n-1}}^{t_n} p^n_*(t)^2 \,\d t \right|,
    \end{equation}
    where $\bar\bfx^n_*=\bfx^{n-1}_*+\tau \bar\bfv^n_*$ and $p^n_*(t)$ is the interpolation polynomial of the values 
    $\|H(.,t_{n-j})\|_{L^2(\Gamma(t_{n-j}))}$ for $j=0,\dots,q-1$. Under the smoothness assumptions of Theorem~\ref{theorem-fd-1}, we have 
    \begin{equation}
        \label{eq:delta_star bound}
        \delta^n_* \le  c \, (h^k+\tau^q),
    \end{equation}
    as follows from \eqref{area-decay} and standard interpolation error bounds.

    We prove the following lemma, which is based on Proposition~\ref{prop:lambda}.

\begin{lemma} [Bound of the Lagrange multiplier]
\label{lem:lambda-bound}
     Under condition~\eqref{math-ind} for $n\le m$, the Lagrange multiplier $\lambda^n$ exists for sufficiently small $\tau$ and $h$ and is bounded by
    \begin{align}
    \label{lambda-bound}
    &|\lambda^n| \le c \delta^n \quad\text{with}\quad 
    \\
    \nonumber
    &\delta^n \le c \sum_{j=0}^{q} \Big(  \| {\widetilde{\bf e}}_{\bf x}^{n-j} \|_{H^1(\Gamma_h[\bfx_*^{n-j}])} + \| \ev^{n-j} \|_{H^1(\Gamma_h[\bfx_*^{n-j}])} 
    + \| \eu^{n-j} \|_{L^2(\Gamma_h[\bfx_*^{n-j}])}   \Big) \notag\\
    &\hspace{21pt} + \delta^n_* + c \, (\tau^q+h^k) . 
    \end{align}
    Moreover, the simplified Newton iteration converges linearly with the convergence factor $\gamma^n=O(\tau\delta^n)$. 
\end{lemma}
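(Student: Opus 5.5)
The plan is to verify the hypotheses of Proposition~\ref{prop:lambda} under the induction hypothesis \eqref{math-ind}, and to estimate the defect $\delta^n$ by comparing it with $\delta^n_*$ from \eqref{delta-star}. Once the proposition applies, it gives $|\lambda^n|\le\rho^n=\delta^n/(\sigma^n(1-\gamma^n))\le 2\delta^n/\sigma^n$. It also gives the contraction factor $\gamma^n\le 2\tau\delta^n\eta^n$, so it suffices to bound $\sigma^n$ from below and $\eta^n$ from above uniformly in $h,\tau$.

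\emph{Step 1: bounds for $\sigma^n$ and $\eta^n$.} By \eqref{math-ind} and \eqref{math-ind-W1inf}, the errors $\ev^{n-j}$ ($j=1,\dots,q$) are $O(h^{1/2})$ in $W^{1,\infty}$. For $\ev^n$ itself I would use \eqref{stability-eu-2}--\eqref{stability-ev}, which hold at $n=m$ and give $\|\ev^n\|_{H^1}\le c h^{3/2}$ and hence $O(h^{1/2})$ in $W^{1,\infty}$. Thus $\bar\bfv^n$ is $W^{1,\infty}$-close to $\bar\bfv^n_*$, and $\bar\bfx^n=\bfx^{n-1}+\tau\bar\bfv^n$ is $W^{1,\infty}$-close to $\bfx^{n-1}_*$. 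The norm equivalences of Section~\ref{subsec:prep} then give
\begin{equation*}
\int_{\Gamma_h[\bar\bfx^n]}\nabla_{\Gamma_h[\bar\bfx^n]}\cdot\bar v_h^n=\int_{\Gamma(t_{n-1})}\nabla_\Gamma\cdot v+o(1)=-\|H(\cdot,t_{n-1})\|^2_{L^2(\Gamma(t_{n-1}))}+o(1),
\end{equation*}
using \eqref{area-decay}. A closed surface has $\int H^2\ge c>0$, so $\sigma^n\ge\sigma_0>0$ for small $h$. Similarly, $\|\nabla\bar v_h^n\|_{L^2}$ and $\|\nabla\bar v_h^n\|_{L^\infty}$ are bounded by constants, so $\eta^n\le C$.

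\emph{Step 2: the defect.} The main work is the Lipschitz-type bound
\begin{equation*}
|\delta^n-\delta^n_*|\le \tfrac1\tau\big|\area_h[\bar\bfx^n]-\area_h[\bar\bfx^n_*]-\area_h[\bfx^{n-1}]+\area_h[\bfx^{n-1}_*]\big| + \tfrac1\tau\Big|\int_{t_{n-1}}^{t_n}(p^n)^2-(p^n_*)^2\Big|.
\end{equation*}

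For the area terms, write $\bar\bfx^n-\bfx^{n-1}=\tau\bar\bfv^n$ and similarly for the exact solution. The first-difference quotient is then $\int_0^1\int_{\Gamma_h[\bfx^{n-1}+s\tau\bar\bfv^n]}\nabla\cdot\bar v_h\,\d s$, and the same holds with starred quantities. Their difference is controlled by $\|\bar\bfv^n-\bar\bfv^n_*\|_{H^1}$ plus the geometric perturbation $\|\ex^{n-1}\|_{H^1}$. Here I use the $W^{1,\infty}$ bounds to linearize divergence integrals under surface perturbations, via the standard estimate $|\int_{\Gamma_h[\bfx]}\nabla\cdot w-\int_{\Gamma_h[\bfx_*]}\nabla\cdot w|\le c\|\bfx-\bfx_*\|_{H^1}\|w\|_{W^{1,\infty}}$. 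Dividing by $\tau$ causes no loss, because the differences are already in velocity form.

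For the polynomial term, $p^n-p^n_*$ interpolates the differences of $L^2$ norms. Bounding these requires comparing $\|H_h^{n-j}\|_{L^2(\Gamma_h[\wtx^{n-j}])}$ with $\|H(\cdot,t_{n-j})\|_{L^2(\Gamma(t_{n-j}))}$. This difference is at most $c(\|\eu^{n-j}\|_{L^2}+\|\widetilde{\bf e}_{\bf x}^{n-j}\|_{H^1})+c\,h^k$, by the Ritz estimate \eqref{eq:ritzest} and geometric approximation errors. Bounded Lagrange interpolation operators then finish this term, and the velocity error is handled by $\evb^n$ with fixed $\beta_j$. Together with \eqref{defects-du-dx-uv}, this yields the stated bound. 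In particular $\delta^n=O(h^{3/2})+c(h^k+\tau^q)$ is small.

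\emph{Step 3: conclusion.} With $\delta^n$ small and $\sigma^n,\eta^n$ bounded, conditions \eqref{gamma} and \eqref{rho} hold for small $\tau,h$. Proposition~\ref{prop:lambda} then yields existence, $|\lambda^n|\le c\delta^n$, and $\gamma^n=O(\tau\delta^n)$.

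I expect the main obstacle to be Step~2's area-difference estimate. The delicate point is showing that the $1/\tau$ prefactor is absorbed without creating $\|\ex\|/\tau$ terms, which is why I would represent both area differences as $s$-integrals of divergences along the segment before comparing them.
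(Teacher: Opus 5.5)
Your proposal is correct and follows essentially the same route as the paper. The paper also bounds $\sigma^n$ from below and $\eta^n$ from above using \eqref{math-ind}, the stability estimates \eqref{stability-eu-2}--\eqref{stability-ev} and inverse inequalities, then compares $\delta^n$ with $\delta^n_*$ by writing both area difference quotients as $\theta$-integrals of surface divergences (Leibniz formula). Your ``standard estimate'' for perturbing these integrals is what the paper derives explicitly, via a second homotopy parameter $\vartheta$ between $\bfx_*^{n-1}+\theta\tau\bar\bfv_*^n$ and $\bfx^{n-1}+\theta\tau\bar\bfv^n$ together with the material-derivative formula for the divergence. (As in the paper, the area term naturally yields $\|\ex^{n-1}\|_{H^1}$ rather than extrapolated errors, and comparing $\Gamma_h[\wtx^{n-j}]$ with $\Gamma(t_{n-j})$ adds an $O(\tau^q)$ extrapolation error that your sub-bound omits; both are harmless for the final estimate.)
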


\begin{proof} We split the proof into two parts, proving the bounds on $\lambda^n$ and $\delta^n$, respectively.

    (a) Since a compact closed surface cannot have zero mean curvature everywhere on the surface, we have along the exact surface that there exists a positive constant $\alpha$ such that
    $$
    -\int_{\Gamma(t)} \nabla_{\Gamma(t)}\cdot v(.,t) =  \int_{\Gamma(t)} H(.,t)^2 \ge  \alpha >0, \qquad 0\le t \le T.
    $$

Since the right-hand side of \eqref{stability-eu-2} depends only on the errors at $t_{n-j}$ with $j\ge 1$ (except the defect term), substituting \eqref{math-ind} into \eqref{stability-eu-2}--\eqref{stability-ev} yields
\begin{align}\label{eun+evn-H1}
        & \| \eu^{n} \|_{H^1(\Ga_h[\xls^n])} 
        + \| \ev^{n} \|_{H^1(\Ga_h[\xls^n])} 
        \le Ch^{\frac32} + c \, (\tau^q+h^k) 
\end{align}
for $q\leq n\leq m$. Under this condition and $\tau\leq c_0h$ and $q\ge 2$, using the inverse estimate of finite element functions, it follows that 
$$
\| \ev^{n} \|_{W^{1,\infty}(\Ga_h[\xls^n])} \le Ch^{\frac12} 
\quad\mbox{for}\,\,\, q\leq n\leq m . 
$$ 
The relation $\bar{\bf x}^n={\bf x}^{n-1}+\tau\bar {\bf v}^n$ and the result $\| \ex^{n-1} \|_{W^{1,\infty}(\Ga_h[\xls^{n-1}])} \le ch^{1/2}$ shown in \eqref{math-ind-W1inf}, imply that 
\begin{align*}
\| \bar{\bf x}^n - {\bf x}_*^n \|_{W^{1,\infty}(\Ga_h[\xls^n])} 
&\le 
\| {\bf e}_{\bf x}^{n-1} \|_{W^{1,\infty}(\Ga_h[\xls^n])} + c\tau 
+ \| {\bf x}_*^{n-1} - {\bf x}_*^n \|_{W^{1,\infty}(\Ga_h[\xls^n])} \\ 
&\le Ch^{\frac12} 
\quad\mbox{for}\,\,\, \tau\le c_0h \,\,\,\mbox{and}\,\,\, q\leq n\leq m . 
\end{align*}
For sufficiently small $h$, the above two results imply that $\int_{\Gamma_h[\bar\bfx^n]} \nabla_{\Gamma_h[\bar\bfx^n]} \cdot \bar v_h^n$ and $\int_{\Gamma(t)} \nabla_{\Gamma(t)}\cdot v(.,t) $ are sufficiently close. Therefore, condition \eqref{sigma-n} is satisfied for $n\le m$ and even with a lower bound:
$$
\sigma^n = -\int_{\Gamma_h[\bar\bfx^n]} \nabla_{\Gamma_h[\bar\bfx^n]} \cdot \bar v_h^n \ge \half \alpha >0. 
$$
Moreover, $\eta^n$ of \eqref{eta-n} is then bounded by a constant. In part (b) of the proof, we show the bound for $\delta^n$ stated in \eqref{lambda-bound}. These bounds allow us to apply Proposition~\ref{prop:lambda} with $\gamma^n=O(\tau\delta^n)$ and $\rho^n=O(\delta^n)$ to conclude that there exists a unique Lagrange multiplier $\lambda^n$ satisfying $|\lambda^n| \le \rho^n = O(\delta^n)$. 

    (b) To prove the bound for $\delta^n$ in \eqref{lambda-bound}, 
    we compare $\delta^n$ with $\delta^n_*$ and note that 
    \begin{align*}
    &\left| \frac1\tau \int_{t_{n-1}}^{t_n} p^n(t)^2 \d t - \frac1\tau \int_{t_{n-1}}^{t_n} p^n_*(t)^2 \,\d t \right| \notag\\
    &
    \le c \sum_{j=0}^q \Big(
    \| \eu^{n-j} \|_{L^2(\Gamma_h[\bfx_*^{n-j}])} 
    + \| {\widetilde{\bf e}}_{\bf x}^{n-j} \|_{L^2(\Gamma_h[\bfx_*^{n-j}])} 
    + \tau^q + h^k \Big) .
    \end{align*}
where the extra term $\| {\widetilde{\bf e}}_{\bf x}^{n-j} \|_{L^2(\Gamma_h[\bfx_*^{n-j}])}+\tau^q+h^k$ comes from the discrepancy between $\Gamma_h[\widetilde{\bf x}^{n-j}]$ and $\Gamma(t_{n-j})$ used in the definitions of $p^n(t)$ and $p_*^n(t)$, respectively.
It remains to bound the difference of the divided differences of the areas,
$$
\eps^n =\frac1\tau  \bigl(\area_h[\bar\bfx^n] - \area_h[\bfx^{n-1}]\bigr) -
\frac1\tau  \bigl(\area_h[\bar\bfx^n_*] - \area_h[\bfx^{n-1}_*]\bigr) .
$$
We introduce the surfaces, for $0\le \theta \le 1$,
\begin{align*}
    \Gamma_h^{n-1,\theta} = \Gamma_h[\bfx^{n-1}+\theta\tau \bar\bfv^{n}],
    \\
    \Gamma_{h,*}^{n-1,\theta} = \Gamma_h[\bfx_*^{n-1}+\theta\tau \bar\bfv_*^{n}],
\end{align*}
and we let $x_h^{n-1,\theta}$ and $\bar v_h^{n,\theta}$ be the finite element functions on $\Gamma_h^{n-1,\theta}$ with nodal vectors 
$\bfx^{n-1}$ and  $\bar\bfv^{n}$, respectively, and analogously $x_{h,*}^{n-1,\theta}$ and $\bar v_{h,*}^{n,\theta}$. We then have, using the Leibniz formula,
$$
\frac1\tau  \bigl(\area_h[\bar\bfx^n] - \area_h[\bfx^{n-1}]\bigr) =
\frac1\tau \int_0^1 \frac{\d}{\d \theta} \int_{\Gamma_h^{n-1,\theta}} 1 \ \d \theta =
\int_0^1  \int_{\Gamma_h^{n-1,\theta}} \nabla_{\Gamma_h^{n-1,\theta}} \cdot \bar v_h^{n,\theta} \ \d \theta.
$$
We proceed analogously for the second term in $\eps^n$ and thus obtain
$$
\eps^n = \int_0^1 \biggl( \int_{\Gamma_h^{n-1,\theta}} \nabla_{\Gamma_h^{n-1,\theta}} \cdot \bar v_h^{n,\theta}  -
\int_{\Gamma_{h,*}^{n-1,\theta}} \nabla_{\Gamma_{h,*}^{n-1,\theta}} \cdot \bar v_{h,*}^{n,\theta} \biggr) \d \theta.
$$
We introduce the further intermediate surfaces, for $0\le \theta \le 1$ and $0\le \vartheta\le 1$,
$$
    \Gamma_h^{n-1,\theta,\vartheta} = \Gamma_h[\bfx_*^{n-1}+\theta\tau \bar\bfv_*^{n}
    + \vartheta (\bfe_\bfx^{n-1} + \theta\tau \bar\bfe_\bfv^{n})],
$$
and we let $\bar v_h^{n,\theta,\vartheta}$ be the finite element function on $\Gamma_h^{n-1,\theta,\vartheta}$ with nodal vector 
$\bar\bfv_*^{n}+\vartheta \bar\bfe_\bfv^{n}$,
so that
\begin{align*}
\eps^n &= \int_0^1 \int_0^1 \frac{\d}{\d \vartheta} \int_{\Gamma_h^{n-1,\theta,\vartheta}} 
\nabla_{\Gamma_h^{n-1,\theta,\vartheta}} \cdot \bar v_{h}^{n,\theta,\vartheta} \ \d \vartheta \ \d \theta
\\
&= \int_0^1 \int_0^1 \int_{\Gamma_h^{n-1,\theta,\vartheta}} 
\Bigl( \mat_\vartheta (\nabla_{\Gamma_h^{n-1,\theta,\vartheta}} \cdot \bar v_{h}^{n,\theta,\vartheta}) \ + \Bigr.
\\
& \hspace{3.2cm}\Bigl.
(\nabla_{\Gamma_h^{n-1,\theta,\vartheta}} \cdot \bar v_{h}^{n,\theta,\vartheta})
(\nabla_{\Gamma_h^{n-1,\theta,\vartheta}} \cdot (\bar e_x^{n-1} + \theta\tau \bar e_v^{n}) \Bigr) \ \d \vartheta \ \d \theta.
\end{align*}
Using the formula for $\mat \nabla_\Gamma f$ in \cite{DziukKronerMuller} and norm equivalences as in Section~\ref{subsec:prep}, we conclude that
$$
|\eps^n| \le c \, (\| \bfe_\bfx^{n-1} \|_{H^1(\Ga_h[\bfx_*^n])} +  \| \bar \bfe_\bfv^{n} \|_{H^1(\Ga_h[\bfx_*^n])} ),
$$
which finally yields the bound of $\delta^n$ in \eqref{lambda-bound}, and completes the proof.
\qed
\end{proof}

\subsection{Stability estimate for $\ex^n$}
Inserting the bound of Lemma~\ref{lem:lambda-bound} for $\lambda^n$ into \eqref{eq:error eq - x - full} and using the equivalence of shifted norms yields
\begin{align}
    \label{stability-ex}
    \| \ex^n \|_{H^1(\Ga_h[\bfx_*^n])} &\le (1+c\tau) \| \bfe_\bfx^{n-1} \|_{H^1(\Ga_h[\bfx_*^{n-1}])} 
    \\
    \nonumber
    &\quad +
    c\tau \sum_{j=0}^q \Bigl(  \| \ev^{n-j} \|_{H^1(\Gamma_h[\bfx_*^{n-j}])} 
    +  \| \eu^{n-j} \|_{L^2(\Gamma_h[\bfx_*^{n-j}])}   \Bigr) + c\tau \delta^n_*.
\end{align}

\subsection{Combined stability estimate}
\label{section:combine}

Combining the stability estimates \eqref{stability-eu-2}, \eqref{stability-ev}, \eqref{lambda-bound} and \eqref{stability-ex}, and using a discrete Gronwall inequality, norm equivalences and the induction argument in the same way as in \cite[Section~10, Part (C)]{MCF}, we arrive at the following stability result, which bounds the errors in terms of defects and initial errors. Up to different constants and the extra defects $\delta^n_*$ of \eqref{delta-star}, this is the same as in \cite[Proposition 10.1]{MCF}.

\begin{proposition}[Stability]
\label{proposition:stability - coupled problem - full} 
 Consider the area-decreasing full discretization \eqref{eq:multistep area-decreasing}.
 Assume that there exists $\kappa$ with $1 < \kappa \leq k$ such that the defects are bounded by 
        \begin{align}
        \nonumber
        &\|\dx^n\|_{H^1(\Ga_h[\bfx_*^n])} \leq c h^\kappa , \quad
		\|\dv^n\|_{H^1(\Ga_h[\bfx_*^n])} \leq c h^\kappa , \quad 
		\|\du^n\|_{L^2(\Ga_h[\bfx_*^n])} \leq c h^\kappa ,
        \\
        \label{eq:assume small defects}
        &\delta^n_* \le c h^\kappa \quad\text{with $\delta^n_*$ of \eqref{delta-star}}, 
        \end{align}
	 for $q\tau \leq n\tau \leq T$, and that also the errors of the starting values are bounded by
	\begin{equation}
	\label{eq:assume small initial values}
		\|\ex^i\|_{H^1(\Ga_h[\bfx_*^i])} \leq c h^\kappa , \quad
		\|\ev^i\|_{H^1(\Ga_h[\bfx_*^i])} \leq c h^\kappa , \quad 
		\|\eu^i\|_{H^1(\Ga_h[\bfx_*^i])} \leq c h^\kappa , \quad
	\end{equation} 
	for $i=0,\dots,q-1$,  and, with the notation $\be \ex^i = (\ex^i-\ex^{i-1})/\tau$, for $i = 1,\dotsc,q-1$, 
	\begin{equation}
	\label{eq:assume small initial values 2}
		\tau^{1/2} \| \be \ex^i \|_{H^1(\Ga_h[\bfx_*^i])}  \leq c h^\kappa .
	\end{equation} 
Then, there exists $h_0>0$ such that the following stability estimate holds for all $h\leq h_0$ and $\tau \leq c_0 h$, and all $n$ with $n\tau \le T$, 
	\begin{equation}
	\label{eq:stability bound - full}
		\begin{aligned}
			& \| \ex^n \|_{H^1(\Ga_h[\bfx_*^n])}^2 +  \| \ev^n \|_{H^1(\Ga_h[\bfx_*^n])}^2 +  \| \eu^n \|_{H^1(\Ga_h[\bfx_*^n])}^2 
            + |\lambda^n|^2 \\
			& \leq \ C  \sum_{i=0}^{q-1}\Bigl( \| \ex^i \|_{H^1(\Ga_h[\bfx_*^i])}^2 + \| \ev^i \|_{H^1(\Ga_h[\bfx_*^i])}^2 + \| \eu^i \|_{H^1(\Ga_h[\bfx_*^i])}^2\Bigr)  + C \tau \sum_{i=1}^{q-1} \| \be \ex^i \|_{H^1(\Ga_h[\bfx_*^i])}^2  \\
			& \quad +C \max_{0\le j \le n}\|\dv^j\|_{H^1(\Ga_h[\bfx_*^j])}^2  
			+ C \tau\sum_{j=q}^{n} \|\du^j\|_{L^2(\Ga_h[\bfx_*^j])}^2 + C  \tau\sum_{j=q}^{n} \|\dx^j\|_{H^1(\Ga_h[\bfx_*^j])}^2
            \\
            & \quad +C\tau \sum_{j=q}^n |\delta_*^j|^2,
		\end{aligned}
	\end{equation}
	where $C$ is independent of $h$, $\tau$ and $n$ with $n\tau\le T$, but depends on the final time $T$.
\end{proposition}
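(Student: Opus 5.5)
We prove the stability bound \eqref{eq:stability bound - full} inside the induction on $m$ set up around \eqref{math-ind}. As long as \eqref{math-ind} holds for $n\le m$, the estimates \eqref{stability-eu-2}, \eqref{stability-ev}, Lemma~\ref{lem:lambda-bound} and \eqref{stability-ex} hold for $q\le n\le m$. We combine them into one discrete Gronwall argument and then check that the resulting bound is of size $O(h^\kappa)$ with $\kappa>3/2$ (since $\kappa>1$, and $h^\kappa\le \tfrac12 h^{3/2}$ once we exploit $\kappa$; if $\kappa\le 3/2$ we would replace $h^{3/2}$ in \eqref{math-ind} by $h^{(1+\kappa)/2}$, which keeps the inverse estimate \eqref{math-ind-W1inf} with a positive power). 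This closes the induction step $m\to m+1$.

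\emph{First step: the $\bfe_\bfx$ recursion.} We square \eqref{stability-ex}, using $(a+\tau b)^2\le(1+\tau)a^2+c\tau b^2$, and obtain
$\|\ex^n\|^2\le(1+c\tau)\|\ex^{n-1}\|^2+c\tau\sum_{j=0}^q(\|\ev^{n-j}\|^2+\|\eu^{n-j}\|^2)+c\tau|\delta_*^n|^2+c\tau\|\dx^n\|^2$.
Here the $\dx^n$ term is kept explicitly from \eqref{eq:error eq - x - full}, and Lemma~\ref{lem:lambda-bound} has already been used to eliminate $\lambda^n$ in favour of $\delta^n$. The terms $\ev^n,\eu^n$ at the current level appear with a factor $\tau$. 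We substitute \eqref{stability-ev} and then \eqref{stability-eu-2} for them. The right-hand side of \eqref{stability-eu-2} involves only past errors, $\dx$-differences and defects.

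\emph{Second step: the difference-quotient term.} The term $c\tau\sum_j\|(\ex^j-\ex^{j-1})/\tau\|_{H^1}^2$ in \eqref{stability-eu-2} is controlled through \eqref{eq:error eq - x - full}:
$\|\be\ex^j\|\le c\|\evb^j\|+|\lambda^j|\,\|\bfvb^j\|_{H^1}+\|\dx^j\|$, where $\|\bfvb^j\|_{H^1}\le c$ by \eqref{math-ind-W1inf}. Thus it reduces again to past $\ev,\eu,\ex$ errors and defects, plus $\delta_*^j$ via Lemma~\ref{lem:lambda-bound}. For $j\le q-1$ we use the starting-value terms \eqref{eq:assume small initial values 2}.

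\emph{Third step: Gronwall.} We add the resulting inequalities into $E^n:=\|\ex^n\|^2+\|\ev^n\|^2+\|\eu^n\|^2$. This gives
$E^n\le C(\text{initial})+c\tau\sum_{j<n}E^j+c\tau E^n+(\text{defect sums as in \eqref{eq:stability bound - full}})$.
For small $\tau$ we absorb the $c\tau E^n$ term and apply the discrete Gronwall inequality. Finally, $|\lambda^n|^2\le c|\delta^n|^2$, and $\delta^n$ is bounded by Lemma~\ref{lem:lambda-bound} in terms of $E^{n-j}$ and $\delta^n_*$, so $|\lambda^n|^2$ is added to the left-hand side with the same right-hand side.

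\emph{Main obstacle.} The main obstacle is ensuring that the bound for $\lambda^n$ does not spoil the structure of the argument. The bound for $\delta^n$ in Lemma~\ref{lem:lambda-bound} contains current-level terms, namely $\widetilde{\bfe}_\bfx^{n}$, $\ev^n$ and $\eu^n$; note that $\widetilde{\bfe}_\bfx^n$ only involves past $\ex$. These current-level terms enter \eqref{stability-ex} only multiplied by $\tau$. They must therefore be handled either by the $\tau$-absorption above or by substituting the explicit bounds \eqref{stability-ev} and \eqref{stability-eu-2}, rather than being allowed to feed back circularly.

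The induction hypothesis is also needed for the lower bound $\sigma^n\ge\alpha/2$. The error bound \eqref{eq:stability bound - full}, combined with \eqref{eq:assume small defects} and \eqref{eq:assume small initial values}, gives $\sqrt{E^{m}}+|\lambda^m|\le Ch^\kappa\le h^{3/2}$ (or the adjusted power) for $h\le h_0$. This closes the induction and hence proves the estimate for all $n\tau\le T$.
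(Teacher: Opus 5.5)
Your proposal is correct and takes the same route as the paper. The paper likewise combines \eqref{stability-eu-2}, \eqref{stability-ev}, Lemma~\ref{lem:lambda-bound} and \eqref{stability-ex} inside the induction on \eqref{math-ind}, with a discrete Gronwall argument as in \cite[Section~10, Part~(C)]{MCF}, and your sketch makes explicit what the paper leaves implicit: the extra $\lambda^j\bfvb^j$ contribution to the difference quotients $\be\ex^j$ in \eqref{stability-eu-2}, and the need to weaken $h^{3/2}$ in \eqref{math-ind} when $\kappa\le 3/2$. Two details need tightening.

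\begin{itemize}
\item To reach your form $E^n\le C(\text{initial})+c\tau\sum_{j<n}E^j+c\tau E^n+\dots$ you must use the summed (telescoped) $\ex$ recursion, not the one-step one, because \eqref{stability-eu-2} bounds $\|\eu^n\|^2$ by $\|\ex^{n-1-j}\|^2$ without a factor $\tau$.
\item The final step $|\lambda^n|^2\le c|\delta^n|^2$ brings in the pointwise term $|\delta_*^n|^2$ and the remainder $c(\tau^q+h^k)$ of Lemma~\ref{lem:lambda-bound}. These are not bounded by $C\tau\sum_j|\delta_*^j|^2$; the right-hand side needs $\max_j|\delta_*^j|^2$ plus a consistency term. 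This imprecision is shared with the paper's statement and is harmless for Theorem~\ref{theorem-fd-1}.
\end{itemize}
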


\subsection{Error bound of Theorem~\ref{theorem-fd-1}}
Together with the known $O(h^k+\tau^q)$ bounds for the defects \cite{MCF}, the bound \eqref{eq:delta_star bound}, and the interpolation and Ritz projection errors in the case of smooth solutions, Proposition~\ref{proposition:stability - coupled problem - full} yields the error bounds of Theorem~\ref{theorem-fd-1} for $q\leq n\leq m$ in the same way as is shown in \cite[Section~12]{MCF}. For sufficiently small $h$ and $\tau\leq c_0h$, applying the inverse estimates of finite element functions recovers \eqref{math-ind} for $n=m+1$. This proves the mathematical induction on \eqref{math-ind} and therefore completes the proof of Theorem~\ref{theorem-fd-1}.\qed

\section{Proof of Theorem \ref{theorem-fd-2}}

The proof of Theorem~\ref{theorem-fd-2} follows the same line as that of Theorem~\ref{theorem-fd-1}, except for the stability estimate for the velocity. We therefore highlight this difference and omit the remaining, essentially identical arguments. 

\subsection{Reformulation of the MDR equations}
We define a discrete Laplace--Beltrami operator $\Delta_{h,\Gamma_h[\bfx]}: S_h[ {\bf x}]\to S_h[ {\bf x}]$ via duality by   
\begin{equation*}
\int_{\Gamma_h[\bfx]} \Delta_{h,\Gamma_h[ {\bf x}]} v_h \, w_h  = -\int_{\Gamma_h[\bfx]} \nabla_{ \Gamma_h[ {\bf x}]} v_h \cdot \nabla_{ \Gamma_h[ {\bf x}]} w_h  \quad \forall w_h\in S_h[\bfx] . 
\end{equation*}
Let $P_{\Gamma_h[\bfx]} : L^2(\Gamma_h[\bfx])\rightarrow S_h[\bfx]$ be the $L^2$-orthogonal projection onto the finite element space.
Then, replacing the test function $\chi_h $ in \eqref{eq:MCF-MDR-eq3} by $-\Delta_{h,\Gamma_h[\bfx]}\chi_h$, we obtain 
\begin{align}\label{grad-v.n.eq}
&\hspace{-10pt} \int_{\Gamma_h[\bfx]} \nabla_{\Gamma_h[\bfx]}  (v_h \cdot n_h)
\cdot  \nabla_{\Gamma_h[\bfx]} \chi_h \notag\\ 
=& - \int_{\Gamma_h[\bfx]} \nabla_{\Gamma_h[\bfx]} H_h \cdot \nabla_{\Gamma_h[\bfx]} \chi_h \notag \\
& + \int_{\Gamma_h[\bfx]} \nabla_{\Gamma_h[\bfx]} ( v_h \cdot n_h- P_{\Gamma_h[\bfx]} (v_h \cdot n_h))
\cdot  \nabla_{\Gamma_h[\bfx]} \chi_h . 
\end{align}
This weak formulation can be written as the following matrix-vector form:
\begin{align}
\label{Mform-Dv}
{\bf D}(\bfx,\bfn)\bfv &= -{\bf A}(\bfx){\bf H} +{\boldsymbol{\rho}}(\bfx,\bfn,\bfv) . 
\end{align}
where ${\boldsymbol{\rho}}(\bfx,\bfn,\bfv)\in \R^\dof$ is the nodal vector satisfying the following relation:  
\begin{equation}\label{IntroL2}
{\boldsymbol{\rho}}(\bfx,\bfn,\bfv)\cdot {\boldsymbol{\chi}}
= \int_{\Gamma_h[\bfx]} \nabla_{\Gamma_h[\bfx]} [ v_h \cdot n_h- P_{\Gamma_h[\bfx]} (v_h \cdot n_h)]
\cdot \nabla_{\Gamma_h[\bfx]} \chi_h 
\end{equation} 
for all $ \chi_h\in S_h[\bfx]$ with nodal vector $\boldsymbol{\chi}\in\R^\dof$, and 
$$
\boldsymbol{\chi}^T D(\mathbf{x},\mathbf{n})\boldsymbol{\psi}
=
\int_{\Gamma_h[\mathbf{x}]}
\nabla_{\Gamma_h[\mathbf{x}]}(\psi_h\cdot n_h)
\cdot
\nabla_{\Gamma_h[\mathbf{x}]}\chi_h
\qquad
\forall \boldsymbol{\chi}\in\mathbb{R}^N,\ 
\boldsymbol{\psi}\in\mathbb{R}^{3N}  
$$
for all $ \chi_h\in S_h[\bfx]$ and $\psi_h\in S_h[\bfx]^3$ with nodal vectors $\boldsymbol{\chi}\in\R^\dof$ and $\boldsymbol{\psi}\in\R^{3\dof}$, respectively. 
The system in \eqref{eq:MCF-matrix-vector-MDR} is used in the implementation, whereas \eqref{Mform-Dv} is a direct consequence of \eqref{eq:MCF-MDR-eq3} and will be employed in the error analysis. 

\subsection{Defects and error equations}
Let $u_h^*(t) \in S_h[\xs(t)]^4$ be the Ritz projection of $u$,  and let $v_h^*(t) \in S_h[\xs(t)]^3$ be the Ritz projection of $v$. Their nodal vectors are denoted by $\bfu^*(t)$ and $\bfv^*(t)$, respectively. For the fully discrete FEM with tangential motion defined in \eqref{eq:multistep-MDR}, the interpolated values $\xls^n={\bfx}^*(t_n)$ and $\boldsymbol{\mu}_*^n=\boldsymbol{\mu}^*(t_n)$, and the Ritz projections $\vls^n={\bfv}^*(t_n)$ and $\uls^n={\bfu}^*(t_n)$, satisfy the following matrix-vector equations: 
\begin{subequations}\label{eq:MDR-defects}
\begin{align} 
    \bfM(\wtx_*^n) {\bf\dot\bfu}_*^n + \bfA(\wtx_*^n) \bfu_*^n = &\  \bff(\wtx_*^n,\wtu_*^n,\wtv_*^n) + \bfM(\wtx_*^n) \du^n , \\[3pt]
    {\bf A}(\wtx_*^n)\bfv_*^n + {\bf B}(\wtx_*^n,{\bfn}_*^n)^{\rm T}\boldsymbol{\mu}_*^n = &\ \bfM(\wtx_*^n){\bf d}_{\boldsymbol{\mu}}^n , \\ 
        {\bf B}(\wtx_*^n,{\bfn}_*^n) \bfv_*^n = &\ - {\bf M}(\wtx_*^n)\bfH_*^n + \bfM(\wtx_*^n){\bf d}_{\bf v}^n , \\
    \frac{1}{\tau} \big( \bfx_*^n - \bfx_*^{n-1} \big) = &\ \bar\bfv_{*}^n + {\bfd}_{\bfx}^n , 
\end{align}
\end{subequations}
where defect terms ${\bfd}_{\bfx}^n$ and $\du^n$ have the same estimates as that in the method without tangential motion (though it is caused by the Ritz projection now):
\begin{align}
\| \du^n \|_{L^2(\Gamma_h[\bfx_*^n])} 
+ \| {\bfd}_{\bfx}^n \|_{H^1(\Gamma_h[\bfx_*^n])} \le c \, (\tau^q+h^k) , 
\end{align} 
while the defect terms  $\dv^n$ and ${\bf d}_{\boldsymbol{\mu}}^n$ are defined in \cite[(3.9b) and (3.9c)]{Hu-Li-2022}, satisfying the following estimates: 
\begin{align}
\| \dv^n \|_{L^2(\Gamma_h[\bfx_*^n])} + \| {\bf d}_{\boldsymbol{\mu}}^n\|_{L^2(\Gamma_h[\bfx_*^n])} \le c \, (\tau^q+h^k) . 
\end{align} 
This estimate was established in \cite[Section~3.6]{Hu-Li-2022} for the semi-discrete (space-discretized) scheme. In our fully discrete setting, we include an additional factor $\tau^q$ to account for the truncation error introduced by the time discretization.

Similarly, $(\bfx_*^n,\bfu_*^n,\bfv_*^n)$ satisfies the following relation corresponding to \eqref{Mform-Dv}:
\begin{align}
\label{Mform-dv}
{\bf D}(\widetilde{\bfx}_*^n,\bfn_*^n)\bfv_*^n &= -{\bf A}(\widetilde{\bfx}_*^n){\bf H}_*^n +{\boldsymbol{\rho}}(\widetilde{\bfx}_*^n,\bfn_*^n,\bfv_*^n) + \bfM(\widetilde{\bfx}_*^n) \widetilde \bfd_{\bfv}^n. 
\end{align} 
where the defect term, $\widetilde \bfd_\bfv^n$, is a nodal vector satisfying the following relation, obtained by substituting the nodal vectors ${\bf v}_*^n$, ${\bf n}_*^n$, ${\bf H}_*^n$ and $\widetilde{\bfx}_*^n$ (viewed as finite element functions on $\Gamma_h[\widetilde{\bfx}_*^n]$) into \eqref{grad-v.n.eq}: 
\begin{align}\label{tdv-weak} 
(\bfM(\widetilde{\bfx}_*^n) \widetilde \bfd_\bfv^n)^{\rm T} \boldsymbol{\chi} 
:= &\ \int_{\Gamma_h[\widetilde{\bfx}_*^n]} \nabla_{\Gamma_h[\widetilde{\bfx}_*^n]} P_{\Gamma_h[\widetilde{\bfx}_*^n]} ({\bf v}_*^n \cdot {\bfn}_*^n)
\cdot  \nabla_{\Gamma_h[\widetilde{\bfx}_*^n]} \boldsymbol{\chi}  \notag \\ 
& + \int_{\Gamma_h[\widetilde{\bfx}_*^n]} \nabla_{\Gamma_h[\widetilde{\bfx}_*^n]} {\bf H}_*^n \cdot \nabla_{\Gamma_h[\widetilde{\bfx}_*^n]} \boldsymbol{\chi} 
\quad\forall\, \boldsymbol{\chi}\in\R^\dof. 
\end{align}
The following estimate holds: 
\begin{align}
\| \widetilde{\bfd}_{\bfv}^n \|_{H^{-1}(\Gamma_h[\widetilde{\bfx}_*^n])} \le c \, (\tau^q+h^k) . 
\end{align} 
Again, this result was established in \cite[Section~3.6]{Hu-Li-2022} for the semi-discrete (space-discretized) scheme. In our fully discrete setting, we include an additional factor $\tau^q$ to account for the truncation error introduced by the time discretization.

By considering the difference between \eqref{eq:MDR-defects} and \eqref{eq:multistep-MDR}, we obtain the following equations for the error functions $\ex^n =  \bfx^n - \xls^n$, $\ev^n = \bfv^n - \vls^n$, ${\bf e}_{\boldsymbol{\mu}}^n = \boldsymbol{\mu}^n - \boldsymbol{\mu}_*^n $ and $\eu^n = \bfu^n - \uls^n$: 
\begin{subequations}\label{stability-form2}
\begin{align}
        \label{MDR:error-u-full}
        \bfM(\widetilde\bfx^n){\bf\dot\bfe}_{\bfu}^n+ \bfA(\widetilde \bfx^n)\eu^n 
        = &\ -  \big( \bfM(\widetilde \bfx^n)-\bfM(\widetilde \bfx_*^n) \big) {\bf\dot\bfu}_*^n \notag \\[3pt]
        &\ - \big( \bfA(\widetilde \bfx^n)-\bfA(\widetilde \bfx_*^n) \big) \bfu_*^n \notag \\[3pt]
        &\ + \big(\bff(\widetilde \bfx^n,\widetilde \bfu^n,\widetilde \bfv^n) - \bff(\widetilde \bfx_*^n,\widetilde \bfu_*^n,\widetilde \bfv_*^n)\big) \notag\\
        &\ -  \bfM(\wtxls^n)  \du^n \\[2pt]
        \label{MDR:error-v-full}
        {\bf B}(\widetilde{\bfx}_*^n,\bfn_*^n)^{\rm T} \bfe_{\boldsymbol{\mu}}^n + {\bf A}(\widetilde{\bfx}_*^n)\bfe_\bfv^n
        =& \ ({\bf B}(\widetilde{\bfx}_*^n,\bfn_*^n)^{\rm T}-{\bf B}(\widetilde{\bfx}^n,\bfn^n)^{\rm T}){\boldsymbol{\mu}}^n \notag\\
        & + ({\bf A}(\widetilde{\bfx}_*^n)-{\bf A}(\widetilde{\bfx}^n))\bfv^n - {\bf M}(\widetilde{\bfx}_*^n) \bfd_{\boldsymbol{\mu}}^n ,\\[5pt] 
        \label{MDR:error-k-full}
        {\bf B}(\widetilde{\bfx}_*^n,\bfn_*^n)\bfe_\bfv^n 
        =&\ ({\bf B}(\widetilde{\bfx}_*^n,\bfn_*^n)-{\bf B}(\widetilde{\bfx}^n,\bfn^n))\bfv^n \notag \\
         &\ - {\bf M}(\widetilde{\bfx}_*^n)\bfe_\bfH^n  + ({\bf M}(\widetilde{\bfx}_*^n)-{\bf M}(\widetilde{\bfx}^n))\bfH^n \notag \\
         &\ - {\bf M}(\widetilde{\bfx}_*^n) \bfd_{\bfv}^n, \\[3pt] 
        \label{MDR:error-x-full}
        \frac{1}{\tau} \big( \ex^n - \ex^{n-1} \big) = &\  \evb^n + \lambda^n \bar\bfv^n  - {\bfd}_{\bfx}^n . 
\end{align}
\end{subequations}
Similarly, the error equation corresponding to \eqref{Mform-dv} is given by 
\begin{align}
\label{error-ev-grad}
{\bf D}(\widetilde{\bfx}_*^n,\bfn_*^n)\bfe_\bfv^n =& \ ({\bf D}(\widetilde{\bfx}_*^n,\bfn_*^n)-{\bf D}(\widetilde{\bfx}^n,\bfn^n))\bfv^n \notag\\
&\ 
- {\bf A}(\widetilde{\bfx}_*^n)\bfe_\bfH^n + ({\bf A}(\widetilde{\bfx}_*^n)-{\bf A}(\widetilde{\bfx}^n))\bfH^n \notag \\
&\ - \big(\boldsymbol{\rho}(\widetilde{\bfx}_*^n,\bfn_*^n,\bfv_*^n) - \boldsymbol{\rho}(\widetilde{\bfx}^n,\bfn^n,\bfv^n)\big) - {\bf M}(\widetilde{\bfx}_*^n) \widetilde \bfd_{\bfv}^n .
\end{align}

\subsection{Stability estimates}
The stability estimate for \eqref{MDR:error-u-full} is the same as \eqref{eq:error eq - u - full} as they have the same structure. Therefore, \eqref{stability-eu-2} still holds, i.e., 
\begin{align}\label{stability-MDR-eu-2}
\|\eu^n\|_{H^1(\Gamma_h[\bfx_*^n])}^2 
    \le &\ c\sum_{j=0}^{q-1} \| \ex^{n-1-j} \|_{H^1(\Gamma_h[\bfx_*^{n-1-j}])}^2 \notag\\
  &\ +  c\tau\sum_{j=q}^{n-1} \Bigl( \|\eu^j\|_{H^1(\Gamma_h[\bfx_*^{j}])}^2 + \|\ex^j\|_{H^1(\Gamma_h[\bfx_*^{j}])}^2 + \|\ev^j\|_{H^1(\Gamma_h[\bfx_*^{j}])}^2 \Bigr) \notag\\
&\ +
c \sum_{j=0}^{q-1} \Bigl( \|\eu^{j}\|_{H^1(\Gamma_h[\bfx_*^{j}])}^2 + \|\ex^{j}\|_{H^1(\Gamma_h[\bfx_*^{j}])}^2 \Bigr) \notag\\
&\ + c \tau \sum_{j=1}^{n-1} \|(\ex^j -\ex^{j-1})/\tau \|_{H^1(\Gamma_h[\bfx_*^{j}])}^2 \notag\\
&\ + 
c \tau \sum_{j=q}^{n} \|\du^j\|_{L^2(\Gamma_h[\bfx_*^j])}^2 
\qquad\mbox{for\, $q\le n\le m$}.
\end{align}

The following stability estimate for \eqref{MDR:error-v-full}-\eqref{MDR:error-k-full} has been established in \cite[inequality (3.78)]{Hu-Li-2022}:
\begin{align*}
\|\bfe_\bfv^n\|_{H^1(\Gamma_h[\widetilde{\bfx}_*^n])} 
& \le c \, (\|\bfe_\bfu^n\|_{H^1(\Gamma_h[\widetilde{\bfx}_*^n])} + \|\widetilde{\bfe}_{\bfx}^n\|_{H^1(\Gamma_h[\widetilde{\bfx}_*^n])} + \| \bfd_\bfv^n\|_{L^2(\Gamma_h[\widetilde{\bfx}_*^n])}) \notag\\
&\quad\
+ c \, (\| {\widetilde\bfd}_\bfv^n\|_{H^{-1}(\Gamma_h[\widetilde{\bfx}_*^n])} 
+ \| \bfd_{\boldsymbol{\mu}}^n\|_{L^2(\Gamma_h[\widetilde{\bfx}_*^n])} ) .
\end{align*}
In view of \eqref{math-ind-W1inf} and the norm equivalence discussed in Section~\ref{subsec:prep}, this error bound (with different constants) is valid also on the surface $\Gamma_h[{\bfx}_*^n]$, i.e.,
\begin{align}\label{stability-MDR-ev}
\|\bfe_\bfv^n\|_{H^1(\Gamma_h[{\bfx}_*^n])} 
& \le c \, (\|\bfe_\bfu^n\|_{H^1(\Gamma_h[{\bfx}_*^n])} + \sum_{j=1}^q\|\bfe_{\bfx}^{n-j}\|_{H^1(\Gamma_h[{\bfx}_*^n])} + \| \bfd_\bfv^n\|_{L^2(\Gamma_h[{\bfx}_*^n])}) \notag\\
&\quad\
+ c \, (\| {\widetilde\bfd}_\bfv^n\|_{H^{-1}(\Gamma_h[\widetilde{\bfx}_*^n])} 
+ \| \bfd_{\boldsymbol{\mu}}^n\|_{L^2(\Gamma_h[{\bfx}_*^n])} ) .
\end{align} 

The stability estimate for~\eqref{MDR:error-x-full} is identical to that for \eqref{eq:error eq - x - full}, since the two relations have the same structure. Moreover,
the stability bound for \(e_\lambda^n\) is unchanged as in Lemma \ref{lem:lambda-bound}. Consequently, the estimate 
\eqref{stability-ex} remains valid for the scheme with
tangential motion, i.e.,
\begin{align}
    \label{stability-ex-tan}
    \| \ex^n \|_{H^1(\Ga_h[\bfx_*^n])} &\le (1+c\tau) \| \bfe_\bfx^{n-1} \|_{H^1(\Ga_h[\bfx_*^{n-1}])} 
    \nonumber\\
    &\quad +
    c\tau \sum_{j=0}^q \Bigl(  \| \ev^{n-j} \|_{H^1(\Gamma_h[\bfx_*^{n-j}])} 
    +  \| \eu^{n-j} \|_{L^2(\Gamma_h[\bfx_*^{n-j}])}   \Bigr) \nonumber\\
    &\quad + c\tau (h^k+\tau^q).
\end{align}
The remainder of the argument proceeds exactly as in Section \ref{section:combine}: combining 
\eqref{stability-MDR-eu-2}, \eqref{stability-MDR-ev} and \eqref{stability-ex-tan} yields the desired error bound. This
completes the proof of Theorem~\ref{theorem-fd-2}.
\hfill\qed






\section{Numerical experiments}
\label{sec:numerics}

We performed numerical simulations and experiments for  mean curvature flow using various algorithms of this paper. We report on:
\begin{itemize}
    \item[-] some important details on implementation;
	\item[-] convergence experiments with and without a Lagrange-multiplier to illustrate the theoretical results of Theorem~\ref{theorem-fd-1} and \ref{theorem-fd-2};
	\item[-] efficiency plots comparing the computational overhead for the Lagrange-multiplier algorithm.
\end{itemize}

The numerical experiments use quadratic evolving surface finite elements, implemented in the Matlab package $\ell$FEM \cite{ellFEM}. For the computation of all finite element matrices and vectors it uses high-order quadratures so that the resulting quadrature error does not feature in the discussion of the accuracies of the schemes. 
The temporal discretization uses linearly-implicit BDF--Adams methods of order $1,\dotsc,5$. 
The initial meshes were all generated using DistMesh \cite{distmesh}, without taking advantage of any symmetry of the surfaces.

\subsection{Implementation}
\label{section:implementation}

Let us recall the fully-discrete algorithm \eqref{eq:multistep area-decreasing} determining $\bfx^n$, $\bfv^n$, and $\bfu^n = (\bfn^n, \bfH^n)$.

(1) We determine the (normalised) extrapolations:
\begin{equation*}
    \widehat{\bfn}^n =  \bfn^n / |\bfn^n| 
    \andquad 
    \wtu^n = \big( \widehat{ {\widetilde \bfn}}^n  , \widetilde \bfH^n \big) .
\end{equation*}

(2) We solve the linear system for the geometric variable $\bfu^n$:
\begin{equation*}
    \Big( \delta_0 \bfM(\wtx^n) + \tau \bfA(\wtx^n) \Big) \bfu^n =  \tau \bff(\wtx^n,\wtu^n) - \bfM(\wtx^n) \sum_{j=1}^q \delta_j \bfu^{n-j}.
\end{equation*}

(3) We compute the velocity $\bfv^n$ (using a normalisation once again):
\begin{equation*}
    \bfv^n = - \bfH^n \bullet \widehat{\bfn}^n .
\end{equation*}

(4) We determine $\lambda^n \in \R$ via a (simplified) Newton iteration, which in turn gives $\bfx^n$: 
To this end, we start by rewriting \eqref{eq:multistep area-decreasing - x}, separating Lagrange-parameter-dependent parts, for arbitrary $\lambda \in \R$:
\begin{align*} 
    \bfx^n(\lambda) := &\ \bfx^n(0) + \lambda \tau {\bar\bfv}^n , \\
    \text{with} \qquad &\ \bfx^n(0) := \bfx^{n-1} + \tau {\bar\bfv}^n \andquad {\bar\bfv}^n := \sum_{j=0}^{q} \beta_j \bfv^{n-j} ,
\end{align*}
in particular
\begin{align*}
    &\ \bfx^n := \bfx^n(\lambda^n), \quad \text{for a $\lambda^n \in \R $ satisfying} ~ \area^n(\lambda^n) = \area^{n-1} - b^n , \\
    &\ \text{with} ~ \area^n(\lambda) := \int_{\Ga_h[\bfx^n(\lambda)]} 1 \andquad b^n :=  \int_{t_{n-1}}^{t_n} p^n(t)^2 \d t \, .
\end{align*}
The integral of the Lagrange interpolation polynomial $p^n$ is computed exactly, via a high-order Gauss quadrature.

The simplified Newton method for $\lambda^n$ reads:
\begin{align*}
    \lambda^{(k+1)} = &\ \lambda^{(k)} + \Delta \lambda^{(k)} ,\\
    \pa_\lambda \area^n( 0 ) \, \Delta \lambda^{(k)} = &\ - \big( \area^n( \lambda^{(k)} ) - ( \area^{n-1} - b^n ) \big) ,
\end{align*}
where the $\lambda$-derivative of $\area^n\colon \R \to \R$ is given, using the Leibniz formula, by
\begin{align*}
    \pa_\lambda \area^n( \lambda ) = &\ \pa_\lambda \int_{\Ga_h[\bfx^n(\lambda)]} 1 = \int_{\Ga_h[\bfx^n(\lambda)]} \nb_{\Ga_h[\bfx^n(\lambda)]} \cdot \big( \tau v_{\beta,h}^n \big) , \\
    \pa_\lambda \area^n( 0 )  = &\ \pa_\lambda \area^n( \lambda )|_{\lambda = 0} = \tau \int_{\Ga_h[\bfx^n(0)]} \nb_{\Ga_h[\bfx^n(0)]} \cdot v_{\beta,h}^n ,
\end{align*}
where $v_{\beta,h}^n$ denotes the appropriate finite element function with nodal values ${\bar\bfv}^n$.

We use the stopping criteria $\big| \area^n( \lambda^{(k)} ) - ( \area^{n-1} - b^n ) \big|\leq \delta b^n$ with a small $\delta \in (0,1)$.

\subsection{Comparing the algorithms without tangential motion with and without Lagrange-multiplier}

We perform numerical experiments for mean curvature flow of a sphere, following \cite[Section~13.1]{MCF}.

We report on convergence experiments for the unit sphere as initial surface $\Ga^0$. We start the algorithms from the nodal interpolations of the exact initial values $\Ga^0$, $\nu^0$, and $H^0 = 2$, subsequent initial values $i=1,\dotsc,q-1$ were chosen to be the exact values. 
In order to illustrate the convergence results of Theorem~\ref{theorem-fd-1}, we have computed the errors between the numerical solution \eqref{eq:multistep area-decreasing} and the exact solution of \eqref{KLL-form}.

\begin{figure}[htbp]
	\centering
	\includegraphics[width=\textwidth, trim={0 0 0 0}, clip]{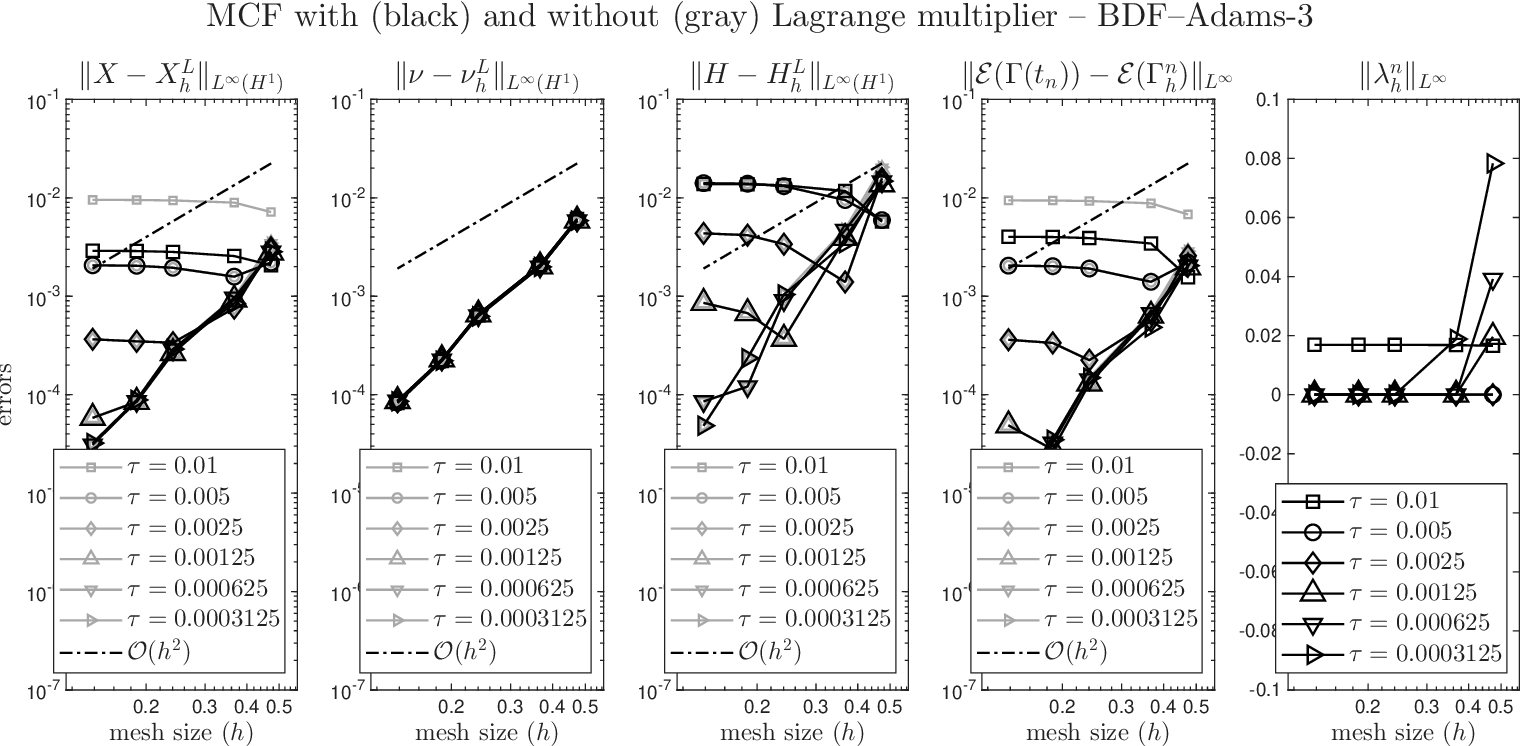}
    \caption{Spatial convergence of the BDF--Adams-3 / quadratic ESFEM discretization of mean curvature flow of a sphere with $T = 0.24$.}
	\label{fig:MCF - comp convplot space}
\end{figure}

\begin{figure}[htbp]
	\centering
	\includegraphics[width=\textwidth, trim={0 0 0 0}, clip]{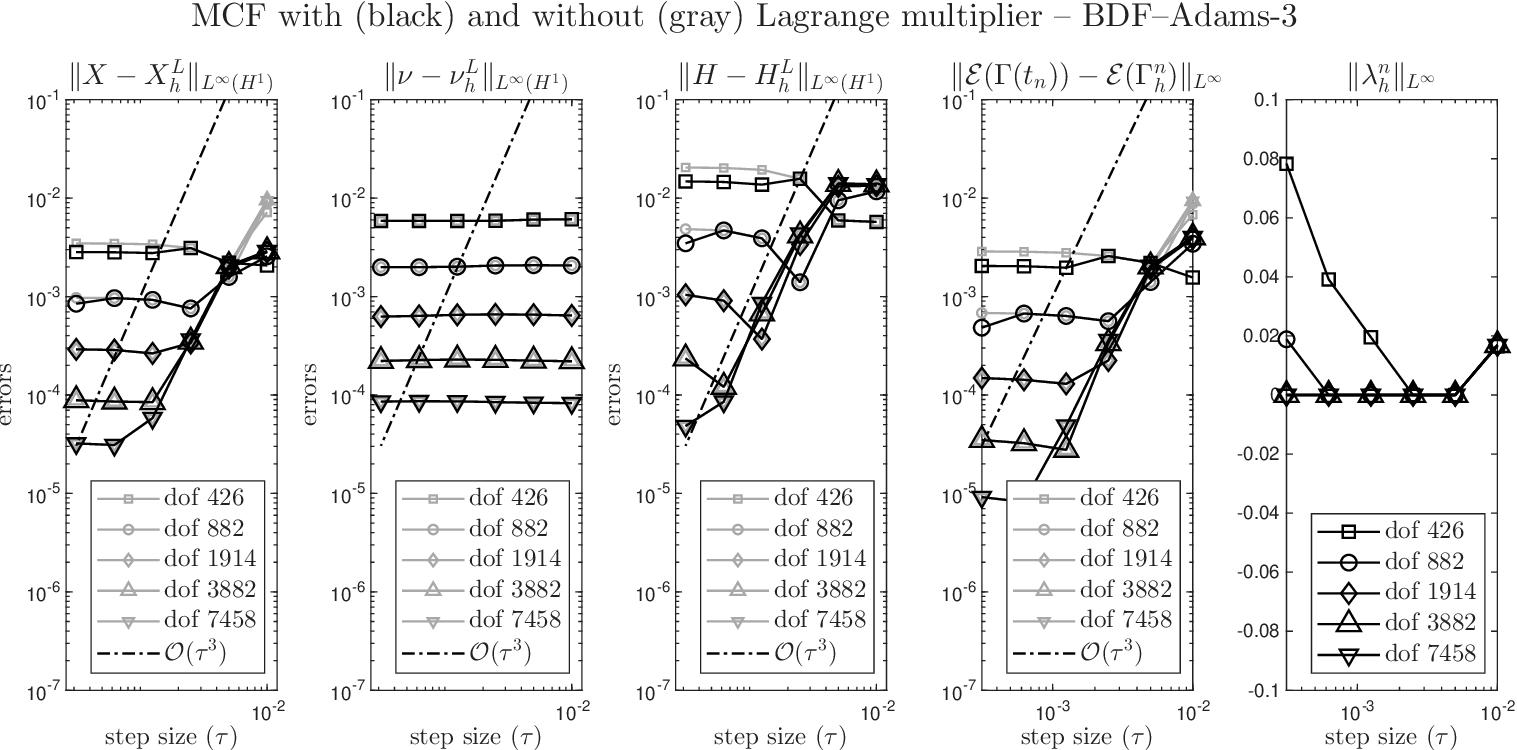}
    \caption{Temporal convergence of the BDF--Adams-3 / quadratic ESFEM discretization of mean curvature flow of a sphere with $T = 0.24$.}
	\label{fig:MCF - comp convplot time}
\end{figure}

In Figure~\ref{fig:MCF - comp convplot space} and \ref{fig:MCF - comp convplot time} we report the errors between the numerical solution and the interpolation of the exact solution until the final time $T=0.24$, for a sequence of meshes (see plots) and for a sequence of time steps $\tau_{k+1} = \tau_k / 2$. 
The gray lines correspond to the algorithm \emph{without} Lagrange multiplier \eqref{eq:MCF matrix-vector}, the black lines correspond to the algorithm \emph{with} Lagrange multiplier \eqref{eq:multistep area-decreasing}. 
The lines marked with different symbols correspond to different time step sizes and to different mesh refinements in Figure~\ref{fig:MCF - comp convplot space} and \ref{fig:MCF - comp convplot time}, respectively.
The double-logarithmic plots in the first four panels report on the $L^\infty(H^1)$ norm of the errors against the mesh width $h$ in Figure~\ref{fig:MCF - comp convplot space}, and against the time step size $\tau$ in Figure~\ref{fig:MCF - comp convplot time}.
The semi-logarithmic plot in the rightmost panel reports on the $L^\infty$-norm of the Lagrange-multiplier.

In Figure~\ref{fig:MCF - comp convplot space} we can observe two regions: a region where the spatial discretization error dominates, matching the $\calO(h^2)$ order of convergence of Theorem~\ref{theorem-fd-1} and \cite[Theorem~6.1]{MCF} (see the reference lines), and a region, with small mesh size, where the temporal discretization error dominates (the error curves flatten out). For Figure~\ref{fig:MCF - comp convplot time}, the same description applies, but with reversed roles, the errors matching the $\calO(\tau^3)$ order of convergence of Theorem~\ref{theorem-fd-1} and \cite[Theorem~6.1]{MCF} (see the reference lines).

\begin{figure}[htbp]
	\centering
	\includegraphics[width=\textwidth, trim={0 0 0 20}, clip]{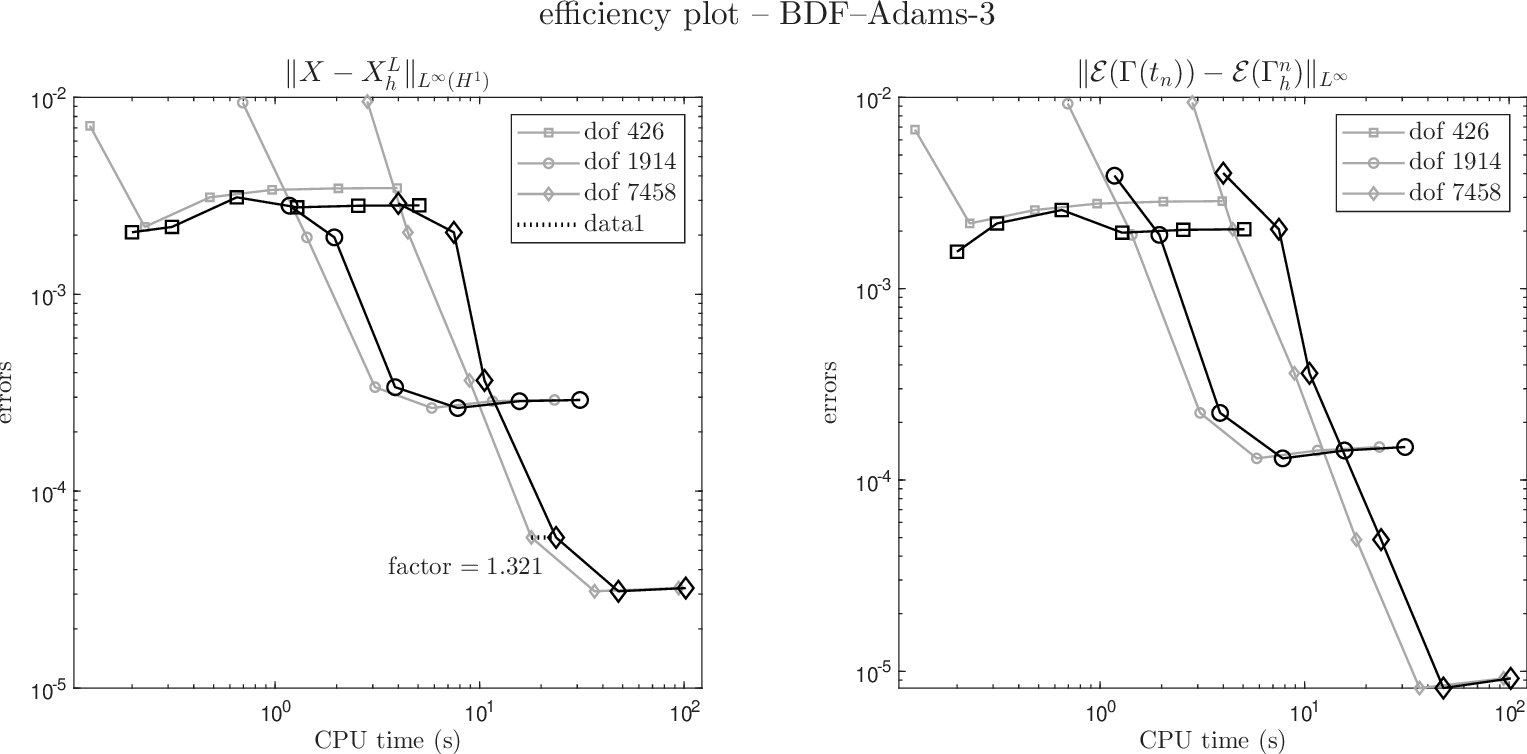}
	\caption{Efficiency plots for the BDF--Adams-3 / quadratic ESFEM discretization of mean curvature flow of a sphere with $T = 0.24$.}
	\label{fig:MCF - comp efficiency}
\end{figure}

In Figure~\ref{fig:MCF - comp efficiency} we report on computational efficiency, that is, we plot the errors in the surface and area against the CPU times required by both algorithms. As before, the gray and black lines respectively correspond to the algorithm \emph{without} and \emph{with} Lagrange multiplier. The lines correspond to various spatial refinements (see plots) while the nodes on each line correspond to time step sizes $\tau_i$ (from the convergence test). 
In Figure~\ref{fig:MCF - comp efficiency} it can be observed that the new structure-preserving algorithm \eqref{eq:multistep area-decreasing} only requires a slight overhead when compared to algorithm \eqref{eq:MCF matrix-vector} without Lagrange-multiplier. Furthermore, it is seen that this computational overhead is independent of mesh size and time step size.

%

\section*{Acknowledgements}
The foundations of this work have been laid when the three authors were participating in the Oberwolfach Research Fellows Programme at the Mathematisches Forschungsinstitut Oberwolfach (MFO) in March 2025.

The exchange between Germany and Hong Kong was funded in part by the Germany/Hong Kong Joint Research Scheme sponsored by the Research Grants Council of Hong Kong and the German Academic Exchange Service of Germany (Ref. No. G-PolyU501/24 and DAAD PPP Projekt-ID: 57750652). 

The work of B.~Kov\'{a}cs was funded by the DFG Heisenberg Programme -- Project-ID 446431602, and by the DFG Research Unit FOR 3013 \textit{Vector- and tensor-valued surface PDEs} (BA2268/6–1). 

The work of B.~Li was funded in part by the National Natural Science Foundation of China (NSFC Project No. 12525111) and the Research Grants Council of Hong Kong (PolyU/RFS2324-5S03). 

\bibliographystyle{abbrv}
\bibliography{MCF_lagrange_literature}

\end{document}